\newcommand{\sN}{{\mathcal N}}
\newcommand{\sX}{{\mathcal X}}
\newcommand{\C}{{\mathbb C}}
\newcommand{\N}{{\mathbb N}}
\renewcommand{\P}{{\mathbb P}}
\newcommand{\R}{{\mathbb R}}
\newcommand{\Z}{{\mathbb Z}}
\newcommand{\lra}{\longrightarrow }
\theoremstyle{plain}
\newtheorem{thm}{Theorem}
\newtheorem{lemma}[thm]{Lemma}
\newtheorem{notation}[thm]{Notation}
\newtheorem{condition}{Condition}
\newtheorem{prop}[thm]{Proposition}
\newtheorem{definition}[thm]{Definition}
\numberwithin{thm}{section}
\numberwithin{equation}{section}
\begin{document}
\title{Dwork Congruences and Reflexive Polytopes}
\author{Kira Samol}
\author{Duco van Straten}

\thanks{This work is entirely supported by DFG Sonderforschungsbereich/Transregio 45}                           
\begin{abstract}
We show that the coefficients of the power series expansion of the principal
period of a Laurent polynomial satisfy strong congruence properties.
These congruences play key role in the explicit $p$-adic analytic continuation 
of the unit-root. The methods we use are completely elementary.     
\end{abstract}

\maketitle

\section{Dwork congruences}
\begin{definition}
Let $(a(n))_{n \in \N_0}$ be a sequence of integers with $a(0)=1$ and let
$p$ be a prime number.
We say that $(a(n))_n$ satisfies the {\em Dwork congruences} if  for all
$s,m,n \in \N_0$ one has
\begin{enumerate}
\item[D1]  \[\frac{a(n)}{a(\lfloor n/p \rfloor)} \in \Z_p\]
\item[D2] \[\frac{a(n+mp^{s+1})}{a(\lfloor n/p \rfloor +mp^s)}\equiv\frac{a(n)}{a(\lfloor n/p \rfloor)} \mod p^{s+1}\]
\end{enumerate}
\end{definition}
In fact, the validity of these congruences is implied by those for which $n<p^{s+1}$, as one sees by writing $n=n'+mp^{s+1}$ with $n' <p^{s+1}$.
By cross-multiplication, $D2$ becomes
\begin{enumerate}
\item[D3] \[a(n+mp^{s+1})a(\lfloor \frac{n}{p} \rfloor) \equiv a(n)a(\lfloor \frac{n}{p}\rfloor+mp^s) \mod p^{s+1}.\]
\end{enumerate}

The congruences for $s=0$ say that for $0 \le n_0 \le p-1$ one has
 \[a(n_0+mp) \equiv a(n_0)a(m) \mod p\]
So if we write $n$ in base $p$
 \[n=n_0+pn_1+\ldots+n_r p^r,\;\;\;0 \le n_i \le p-1,\] 
 we find by repeated application that
\[a(n) \equiv a(n_0)a(n_1)\ldots a(n_r) \mod p\]
In fact, this is easily seen to be equivalent to $D3$ for $s=0$.

Similarly, for higher $s$ the congruences $D3$ are equivalent to
\[a(n_0+...+n_{s+1}p^{s+1})a(n_1+...+n_{s}p^{s-1})\equiv\]\begin{equation} a(n_0+...+n_{s}p^{s})a(n_1+...+n_{s+1}p^{s}) \mod p^{s+1}.\label{cong1}\end{equation}

The congruences express a strong $p$-adic analyticity property of the 
function 
\[n \mapsto a(n)/a(\lfloor n/p \rfloor)\]
and play a key role in the $p$-adic analytic continuation of the series
\[F(t) =\sum_{n=0}^{\infty} a(n)t^n\]
to points on the closed $p$-adic unit disc.
More precisely, one has the following theorem (see \cite{dwork}, Theorem 3.)

\begin{thm}\label{dworksatz3}
Let $(a(n))_n$ be a $\mathbb{Z}_p-$valued sequence satisfying the Dwork congruences {\em D1} and {\em D2 }. Let
\[F(t)=\sum_{n=0}^{\infty}a(n)t^n\]
and
\[F^s(t)=\sum_{n=0}^{p^s-1}a(n)t^n.\]
Let ${\frak D}$ be the region in $\mathbb{Z}_p$
\[{\frak D}:=\{x \in \mathbb{Z}_p, |F^1(x)|=1\}.\]
Then $F(t)/F(t^p)$ is the restriction to $p\mathbb{Z}_p$ of an analytic element $f$ of support ${\frak D}$:
\[f(x)=\lim_{s\rightarrow\infty}F^{s+1}(x)/F^s(x^p).\]
\end{thm}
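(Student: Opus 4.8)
\pf
The plan is to realize $f$ as the uniform limit on ${\frak D}$ of the partial quotients
\[
g_s(t):=\frac{F^{s+1}(t)}{F^s(t^p)}\in\Z_p[[t]]
\]
(well defined since $F^s(0)=a(0)=1$), and then to check that this limit restricts to $F(t)/F(t^p)$ on $p\Z_p$. So two things must be established: that the $g_s$ are analytic elements on ${\frak D}$ converging uniformly there, and the identification on $p\Z_p$.

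First I would record the geometry of ${\frak D}$ together with the non-vanishing of the partial sums $F^s$. Since $F^1\in\Z_p[t]$, whether $|F^1(x)|=1$ depends only on the residue class of $x$, so ${\frak D}$ is a finite union of residue discs. It contains $p\Z_p$, because $F^1(x)\equiv a(0)=1\pmod p$ for $x\in p\Z_p$; and it is stable under $x\mapsto x^p$, because $\overline{F^1}$ has coefficients in $\F_p$, whence $\overline{F^1(x^p)}=\overline{F^1(x)}^{\,p}$ and so $|F^1(x^p)|=1\Leftrightarrow|F^1(x)|=1$. The $s=0$ Dwork congruence $a(n_0+mp)\equiv a(n_0)a(m)\pmod p$ gives $F^{s+1}(t)\equiv F^1(t)F^s(t^p)\pmod p$, and iterating, $F^s(t)\equiv\prod_{i=0}^{s-1}F^1(t^{p^i})\pmod p$. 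Hence for $x\in{\frak D}$ all the $x^{p^i}$ lie in ${\frak D}$, so $|F^1(x^{p^i})|=1$ for every $i$, and therefore $|F^s(x)|=1$ for every $s\ge1$. In particular each $g_s$ is a rational function whose numerator lies in $\Z_p[t]$ and whose denominator is a polynomial invertible of norm $1$ on ${\frak D}$; in particular $g_s$ has no poles in ${\frak D}$, so it is an analytic element on ${\frak D}$.

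The crucial input is the polynomial congruence
\begin{equation}\label{keycong}
F^{s+2}(t)\,F^s(t^p)\equiv F^{s+1}(t)\,F^{s+1}(t^p)\pmod{p^{s+1}}.
\end{equation}
Granting \eqref{keycong}: for $x\in{\frak D}$,
\[
g_{s+1}(x)-g_s(x)=\frac{F^{s+2}(x)F^s(x^p)-F^{s+1}(x)F^{s+1}(x^p)}{F^{s+1}(x^p)\,F^s(x^p)},
\]
and since the denominator has absolute value $1$ on ${\frak D}$ we get $|g_{s+1}(x)-g_s(x)|\le|p|^{s+1}$ uniformly in $x$. Thus $(g_s)_s$ converges uniformly on ${\frak D}$; a uniform limit of analytic elements is an analytic element, so $f:=\lim_s g_s$ is an analytic element of support ${\frak D}$ with $f(x)=\lim_s F^{s+1}(x)/F^s(x^p)$. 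Finally, for $x\in p\Z_p$ one has $|a(n)x^n|\le|x|^n\to0$, so $F^{s+1}(x)\to F(x)$ and $F^s(x^p)\to F(x^p)$; since $x^p\in p\Z_p$ forces $F(x^p)\equiv1\pmod p$, the denominators stay of norm $1$, and hence $g_s(x)\to F(x)/F(x^p)$. Comparing the two limits shows that $f$ restricts to $F(t)/F(t^p)$ on $p\Z_p$.

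It remains to prove \eqref{keycong}, and this is the technical heart. I would argue by induction on $s$; the base case $s=0$ is $F^2(t)\equiv F^1(t)F^1(t^p)\pmod p$, established above. For the inductive step one splits off the lowest $p$-adic digit:
\[
F^{s+2}(t)-F^{s+1}(t)=\sum_{m=1}^{p-1}\ \sum_{n<p^{s+1}}a(n+mp^{s+1})\,t^{n+mp^{s+1}},
\]
and rewrites each coefficient modulo $p^{s+1}$ by $D2$ (using $D1$ to see that the quotient below lies in $\Z_p$):
\[
a(n+mp^{s+1})\equiv\frac{a(n)}{a(\lfloor n/p\rfloor)}\,a\!\left(\lfloor n/p\rfloor+mp^s\right)\pmod{p^{s+1}}.
\]
Multiplying by $F^s(t^p)$ and comparing coefficientwise with $F^{s+1}(t)\left(F^{s+1}(t^p)-F^s(t^p)\right)$ reduces \eqref{keycong} to showing that, for all $n_0\in\{0,\dots,p-1\}$, all $m\in\{1,\dots,p-1\}$ and all $V$,
\[
\sum_{\substack{\mu+j=V\\0\le\mu,j<p^s}}\frac{a(n_0+p\mu)}{a(\mu)}\Bigl(a(\mu+mp^s)a(j)-a(\mu)a(j+mp^s)\Bigr)\equiv0\pmod{p^{s+1}}.
\]
The delicate point is the precision: estimating the bracketed ``Wronskian'' term by term, the Dwork congruences alone yield only divisibility by $p$, so the missing power $p^{s}$ must come from cancellation among the summands with $\mu+j=V$ --- concretely, by feeding the level-$(s-1)$ instance of \eqref{keycong} back in through $D2$ in the form $\mu\equiv\mu'\bmod p^{r}\Rightarrow a(n_0+p\mu)/a(\mu)\equiv a(n_0+p\mu')/a(\mu')\bmod p^{r+1}$. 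This precision bookkeeping is the step I expect to be the main obstacle; everything else is routine.
\hfill$\square$
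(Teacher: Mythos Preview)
The paper does not supply a proof of this theorem at all: it is quoted as Dwork's Theorem~3 from \cite{dwork} and used as a black box. So there is no ``paper's own proof'' to compare against; your write-up is being measured against Dwork's original argument.

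Your outline is precisely Dwork's route. The preliminary geometry of ${\frak D}$ (union of residue discs, containing $p\Z_p$, stable under $x\mapsto x^p$), the factorisation $F^s(t)\equiv\prod_{i<s}F^1(t^{p^i})\bmod p$ giving $|F^s(x)|=1$ on ${\frak D}$, and the reduction of uniform convergence of $g_s$ to the polynomial congruence
\[
F^{s+2}(t)\,F^s(t^p)\equiv F^{s+1}(t)\,F^{s+1}(t^p)\pmod{p^{s+1}}
\]
are all correct and exactly as in \cite{dwork}. The identification of the limit with $F(t)/F(t^p)$ on $p\Z_p$ is likewise routine.

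The gap is the one you yourself flag: you do not prove the key congruence. Your inductive set-up is sound, and the rewriting of $a(n+mp^{s+1})$ via D1/D2 is the right first move, but your displayed ``Wronskian'' reduction and the sentence ``the missing power $p^s$ must come from cancellation among the summands'' is exactly where the work lies, and you stop there. In Dwork's paper this congruence is his Theorem~2 (with Lemma~1 as the main tool), proved by a careful coefficient-by-coefficient comparison rather than the bracket-and-cancel scheme you sketch; the induction uses D2 at depth~$s$ together with the level-$s$ instance of the same congruence, and the bookkeeping is not entirely trivial. As it stands your proposal is an accurate skeleton with the central lemma left open.
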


The congruences were used in \cite{SvS} to determine Frobenius polynomials
associated to Calabi-Yau motives coming from fourth order operators of
Calabi-Yau type from the list \cite{AESZ}. 
Although there are many examples of sequences that satisfy these congruences, 
the true cohomological meaning remains obscure at present. For a recent
interpretation in terms of formal groups, see \cite{yu}.
In this paper we will give a completely elementary proof of the congruences
$D3$ for sequences $(a(n))_n$ that arise as constant term of the powers of
a fixed Laurent polynomial with integral coefficients and whose Newton
polyhedron contains a unique interior point. These include the series that
come from reflexive polytopes. 
  
\section{Laurent polynomials}
We will use the familiar multi-index notation 
for monomials and exponents
\[ X^{\bf a}=X_1^{a_1}X_2^{a_2}\ldots X_n^{a_n},\;\;\;{\bf a}=(a_1,a_2,\ldots,a_n) \in \Z^n\] 
to write a general Laurent-polynomial as
\[f =\sum_{\bf a} c_{\bf a}X^{\bf a} \in \Z[X_1,X_1^{-1},X_2,X_2^{-1},\ldots,X_n,X_n^{-1}].\] 

The {\em support} of $f$ is the set of exponents $\bf a$ occuring in $f$, i.e. 
\[ \textup{supp}(f):=\{{\bf a} \in \Z^n\;|\;c_{\bf a} \neq 0\}\]
The {\em Newton polyhedron} $\Delta(f) \subset \R^n$ of $f$ is defined as the
convex hull of its support
\[ \Delta(f):=\textup{convex}(\textup{supp}(f))\]

When the support of $f$ consists of $m$ monomials, we can put the
information of the polyhedron $\Delta:=\Delta(f)$ in an $n \times m$ matrix $\mathcal{A} \in Mat(m \times n, \Z)$, whose columns ${\bf a}_j$, 
$j=1,2,\ldots,m$ are the exponents of $f$;
\[\mathcal{A}=\left({\bf a}_1,{\bf a}_2,\ldots,{\bf a}_m \right)=\left(
\begin{array}{cccc}
a_{1,1}&a_{1,2}&\ldots&a_{1,m}\\
\vdots&        &       & \vdots\\
a_{n,1}&a_{1,2}&\ldots&a_{n,m}\\
\end{array} \right)\]
so that we can write 
\[f=\sum_{j=1}^m c_j X^{{\bf a}_j}=\sum_{j=1}^m c_j \prod_{i=1}^n X^{a_{i,j}}\]
The polyhedron $\Delta$ is the image of the standard simplex $\Delta_m$
under the map
\[\R^m \stackrel{\mathcal{A}}{\lra} \R^n\]

The following theorem will play a key role in the sequel.

\begin{thm}\label{reflexivsatz}
Let $\Delta$ be an integral polyhedron with $0$ as unique interior
point. Then for all non-negative integral vectors 
$(\ell_1,\ell_2,\ldots,\ell_m) \in \Z^m$ such that $\sum_{i=1}^ma_{i,j}\ell_j\not=0$ for some $1\leq i \leq n$, one has
\[
\gcd_{i=1,...,n}(\sum_{j=1}^m a_{i,j}\ell_j)\leq \sum_{j=1}^m\ell_j\]

\end{thm}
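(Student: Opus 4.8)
The plan is to deduce the inequality from the single hypothesis that $0$ is the unique interior lattice point of $\Delta$, by looking at a well-chosen lattice point on the ray spanned by the vector $v := \sum_{j=1}^{m}\ell_j {\bf a}_j \in \Z^n$, whose $i$-th coordinate is $\sum_{j=1}^{m} a_{i,j}\ell_j$. I would set $L := \sum_{j=1}^{m}\ell_j$ and $d := \gcd_{i=1,\ldots,n}\bigl(\sum_{j=1}^{m} a_{i,j}\ell_j\bigr) = \gcd_i(v_i)$. The assumption that $\sum_j a_{i,j}\ell_j \neq 0$ for some $i$ is exactly the assumption $v \neq 0$; since $v = \sum_j \ell_j {\bf a}_j$, this forces not all $\ell_j$ to vanish, so $L \geq 1$, and likewise $d \geq 1$, so that $w := v/d \in \Z^n$ is a well-defined nonzero integral vector.

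The first step would be the observation that $\tfrac{1}{L}v = \sum_{j=1}^{m}\tfrac{\ell_j}{L}{\bf a}_j$ is a convex combination of the points ${\bf a}_1,\ldots,{\bf a}_m \in \Delta$, and therefore lies in $\Delta$. Next I would introduce the ray $R := \{\, tw : t \geq 0 \,\}$ through the origin and $v$. Because $0 \in \mathrm{int}(\Delta)$ and $\Delta$ is closed and convex, $R \cap \Delta$ is an initial segment $\{\, tw : 0 \leq t \leq T \,\}$ with $T := \sup\{\, t \geq 0 : tw \in \Delta \,\} \in (0,\infty]$. Since $\tfrac{1}{L}v = \tfrac{d}{L}\,w$ sits at parameter $t = d/L$ along this ray and lies in $\Delta$, this yields $d/L \leq T$.

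The crux is then to show $T \leq 1$, for then $d \leq L$ as claimed. Here I would argue by contradiction: if $T > 1$, choose $t_1 \in (1,T]$ with $t_1 w \in \Delta$ (take $t_1 = T$ when $T < \infty$, and $t_1 = 2$ otherwise). Then $w = \tfrac{1}{t_1}(t_1 w) + \bigl(1 - \tfrac{1}{t_1}\bigr)\cdot 0$ is a convex combination of $t_1 w \in \Delta$ and the interior point $0$ with positive weight on the latter, so by the standard fact that $[x,y) \subset \mathrm{int}(\Delta)$ whenever $x \in \mathrm{int}(\Delta)$ and $y \in \Delta$, we get $w \in \mathrm{int}(\Delta)$. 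But $w$ is a nonzero element of $\Z^n$, contradicting that $0$ is the unique interior lattice point of $\Delta$. Hence $T \leq 1$ and the theorem follows.

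I do not expect a genuine obstacle; the whole argument is the elementary principle that a nonzero lattice point of $\Delta$ can never be interior, so one cannot move past it along its own ray without leaving $\Delta$. The only points needing care are the degenerate bookkeeping — checking that $v \neq 0$ really forces $L \geq 1$ and $d \geq 1$ so that $w$ is an honest nonzero lattice point — and the mild subtlety that "interior point" in the statement must be read as "interior lattice point" (a convex body with nonempty interior has infinitely many interior points); the case $T = \infty$ is absorbed uniformly by the choice of $t_1$.
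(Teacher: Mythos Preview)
Your argument is correct and follows essentially the same route as the paper's proof: both produce the nonzero lattice point $w=v/d$ and derive a contradiction from the fact that, were $d>L$, this point would lie in the interior of $\Delta$. The paper reaches that conclusion in one line by writing $v/g=\sum_j(\ell_j/g){\bf a}_j$ with $\sum_j \ell_j/g<1$ and asserting that such sub-convex combinations are interior (implicitly using that $0\in\mathrm{int}(\Delta)$), whereas you unpack the same fact via the ray parameter $T$ and the standard inclusion $[x,y)\subset\mathrm{int}(\Delta)$ for $x$ interior; your version is slightly more explicit about why the point is genuinely interior, but the underlying idea is identical.
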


\begin{proof}
Assume that there exists a non-negative integral vector $\ell=(\ell_1,...,\ell_m) \in \Z^m$ such that $\sum_{i=1}^ma_{i,j}\ell_j\not=0$ for some $1\leq i \leq n$ and
\[g:=\gcd_{i=1,...,n}(\sum_{j=1}^ma_{i,j}\ell_j)>\sum_{j=1}^m\ell_j.\] 
We have
\[{\bf a}_1\ell_1+...+{\bf a}_m\ell_m=
\mathcal{A} \left(\begin{array}{c}\ell_1 \\ \vdots \\ \ell_m\end{array}\right)=\left(\begin{array}{c}\sum_{j=1}^ma_{1,j}\ell_j \\ \vdots \\ \sum_{j=1}^ma_{n,j}\ell_j\end{array}\right).\]
The components of the vector at the right hand side are all divisible by $g$, so that after division
by $g$ we obtain a non-zero lattice point
\[v:=\frac{\ell_1}{g}{\bf a_1}+...+\frac{\ell_m}{g}{\bf a}_m \in \Z^n \]
of $\Delta$ with
\[\sum_{j} \frac{\ell_j}{g} < 1\]
The interior points of $\Delta$ (i.e. the points that do not lie on the boundary) 
consist of the combinations 
\[\alpha_1{\bf a}_1+...+\alpha_m{\bf a}_m\] 
of the columns of $\mathcal{A}$ with $\sum_{j=1}^m\alpha_j<1$.
As $0$ was assumed to be the only interior lattice point of $\Delta$ we
arrive at a contradiction. 
\end{proof}

We remark that the above statement applies in particular to {\em reflexive polyhedra}.

\section{The fundamental period}

\begin{notation}
For a Laurent-polynomial we denote by $[f]_0$ the {\em constant term}, that is, the
coefficient of the monomial $X^0$. 
\end{notation}

\begin{definition}
The {\em fundamental period} of $f$ is the series
\[ \Phi(t):=\sum_{k=0}^{\infty} a(k) t^k, \;\;\;a(k):=[f^k]_0\]
\end{definition}

Note that the function $\Phi(t)$ can be interpreted as the period of a holomorphic
differential  form on the hypersurface
$X_t:=\{t.f=1\} \subset (\C^*)^n$, as one has
\[
\begin{array}{rcl}
\Phi(t)&=&\sum_{k=0}^{\infty} [f^k]_0 t^k\\[2mm]
       &=&\sum_{k=0}^{\infty}\frac{1}{(2\pi i)^n}\int_T f^k t^k \Omega\\[2mm]
       &=&\frac{1}{(2\pi i)^n} \int_T\sum_{k=0}^{\infty} f^k t^k \Omega\\[2mm]
       &=&\frac{1}{(2\pi i)^n} \int_T \frac{1}{1-t f}\Omega\\[2mm]
       &=&\int_{\gamma_t} \omega_t\\
\end{array}
\]
Here $\Omega:=\frac{dX_1}{X_1}\frac{dX_2}{X_2}\ldots\frac{dX_n}{X_n}$, $T$ is the cycle
 given by $|X_i| =\epsilon_i$ and homologous to the Leray coboundary
of $\gamma_t \in H_{n-1}(X_t)$ and
\[\omega_t=Res_{X_t} (\frac{1}{1-t f}\Omega)\]

In particular, $\Phi(t)$ is a solution of a Picard-Fuchs equation; the coefficients
$a(k)$ satisfy a linear recursion relation.

\begin{thm}\label{hauptsatz} Let $f \in \Z[X_1,X_1^{-1},\ldots, X_n, X_n^{-1}]$ with integral coefficients.
Assume that the Newton polyhedron $\Delta(f)$ has $0$ as its unique interior lattice point.\\
Then the coefficients $a(n)=[f^n]_0$ of the fundamental period satisfy for each prime number
$p$ and $s \in \N$ the congruence
\[a(n_0+...+n_sp^s)a(n_1+...+n_{s-1}p^{s-2})\equiv\]\begin{equation} a(n_0+...+n_{s-1}p^{s-1})a(n_1+...+n_sp^{s-1}) \mod p^s.\label{cong1}\end{equation}
where $0 \le n_i \le p-1$ for $0\leq i \leq s-1$.
\end{thm}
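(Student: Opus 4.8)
The plan is to work throughout with the coefficients $[f^N]_{\mathbf w}$ of $X^{\mathbf w}$ in $f^N$, which by the multinomial theorem are $[f^N]_{\mathbf w}=\sum_{\mathcal{A}\ell=\mathbf w,\ \sum_j\ell_j=N}\binom{N}{\ell}c^{\ell}$, so that $a(N)=[f^N]_0$. The first step is to turn Theorem \ref{reflexivsatz} into a vanishing principle: if $\mathbf w\neq 0$ is divisible by $p^r$ and $N<p^r$, then $[f^N]_{\mathbf w}=0$, since any $\ell$ with $\mathcal{A}\ell=\mathbf w$, $\sum_j\ell_j=N$ would force $p^r\le\gcd_i((\mathcal{A}\ell)_i)\le N$. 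The only ring‑theoretic input is Fermat/Frobenius: $f(X)^p=f(X^p)+p\pi(X)$ with $\pi\in\Z[X^{\pm}]$, and iterating the elementary congruence lemma ``$u\equiv v\bmod p^a\Rightarrow u^p\equiv v^p\bmod p^{a+1}$'' gives $f(X)^{p^a}\equiv f(X^p)^{p^{a-1}}\bmod p^a$.

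Write $N=n_0+\dots+n_sp^s$ and $N'=N-n_sp^s$, $M=\lfloor N/p\rfloor$, $M'=\lfloor N'/p\rfloor$ (so $M=M'+n_sp^{s-1}$). Then $f^N=f^{N'}(f^{p^s})^{n_s}\equiv f^{N'}f(X^p)^{n_sp^{s-1}}\bmod p^s$, and taking constant terms in $X$, $a(N)\equiv\sum_{\mathbf u}[f^{N'}]_{-p\mathbf u}\,[f^{n_sp^{s-1}}]_{\mathbf u}\bmod p^s$, while exactly $a(M)=\sum_{\mathbf u}[f^{M'}]_{-\mathbf u}\,[f^{n_sp^{s-1}}]_{\mathbf u}$. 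Substituting into the left side of \eqref{cong1} and observing that the $\mathbf u=0$ terms equal $a(N')a(M')a(n_sp^{s-1})$ on both sides and cancel, the congruence \eqref{cong1} reduces to
\[
\sum_{\mathbf u\neq 0}\bigl([f^{N'}]_{-p\mathbf u}\,a(M')-a(N')\,[f^{M'}]_{-\mathbf u}\bigr)\,[f^{n_sp^{s-1}}]_{\mathbf u}\ \equiv\ 0\ \bmod\ p^s .
\]

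Now I would bring in the vanishing principle twice. If $v_p(\mathbf u)=j$ then $[f^{M'}]_{-\mathbf u}$ and $[f^{N'}]_{-p\mathbf u}$ both vanish unless $M'\ge p^j$, so only $\mathbf u$ with $j\le s-2$ survive; and a short iteration of $f(X)^{p^a}\equiv f(X^p)^{p^{a-1}}$ (peeling $j$ powers of $p$ off the exponent, then using that a non‑$p$‑divisible exponent gains one more) shows $p^{\,s-1-j}\mid[f^{n_sp^{s-1}}]_{\mathbf u}$ for such $\mathbf u$. Hence each surviving summand is divisible by $p^s$ as soon as $[f^{N'}]_{-p\mathbf u}\,a(M')\equiv a(N')\,[f^{M'}]_{-\mathbf u}\bmod p^{\,j+1}$ with $j=v_p(\mathbf u)$, $M'=\lfloor N'/p\rfloor$. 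I would therefore prove the statement $\mathcal{T}(s)$: \emph{for all $N<p^s$ and all $\mathbf w$, $[f^N]_{p\mathbf w}\,a(\lfloor N/p\rfloor)\equiv a(N)\,[f^{\lfloor N/p\rfloor}]_{\mathbf w}\bmod p^{\min(v_p(\mathbf w)+1,\,s)}$}, by induction on $s$ (the base $s=1$ being immediate from the vanishing principle), and then \eqref{cong1} follows from $\mathcal{T}(s)$ applied with $N=N'<p^s$ and $\mathbf w=-\mathbf u$.

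For the inductive step of $\mathcal{T}(s)$ I would peel one digit again: $[f^N]_{p\mathbf w}=a(n_0)[f^{\bar N}]_{\mathbf w}+\sum_{k\ge 1}\binom{\bar N}{k}p^k[Q_k]_{p\mathbf w}$ where $Q_k=f^{n_0}\pi^kf(X^p)^{\bar N-k}$ and the $k=0$ term collapses because $n_0<p$; the case $\mathbf w=0$ of this expresses $a(N)-a(n_0)a(\bar N)$ as the same sum with $p\mathbf w$ replaced by $0$. Subtracting, the defect $[f^N]_{p\mathbf w}a(\bar N)-a(N)[f^{\bar N}]_{\mathbf w}$ equals $\sum_{k\ge 1}\binom{\bar N}{k}p^k\bigl(a(\bar N)[Q_k]_{p\mathbf w}-[f^{\bar N}]_{\mathbf w}[Q_k]_0\bigr)$, and because of the factor $p^k$ it suffices to bound the $k$‑th parenthesis modulo $p^{\min(v_p(\mathbf w)+1,s)-k}$ for $1\le k<\min(v_p(\mathbf w)+1,s)$. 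Expanding over $X$‑exponents, this parenthesis reorganizes as $\sum_{\mathbf z}[f^{n_0}\pi^k]_{-p\mathbf z}\bigl(a(\bar N)[f^{\bar N-k}]_{\mathbf z+\mathbf w}-[f^{\bar N}]_{\mathbf w}[f^{\bar N-k}]_{\mathbf z}\bigr)$, where (rewriting $p^k\pi^k=(f^p-f(X^p))^k$) the exponents $-p\mathbf z$ are $p$‑divisible so the vanishing principle restricts $\mathbf z$, and the ``commutator'' factor is controlled by the inductive hypothesis $\mathcal{T}(s-1)$ applied to $\bar N<p^{s-1}$ and to $\bar N-k$. Making this final estimate precise — tracking, for each $k$ and each surviving $\mathbf z$, that the valuation of the product of the two factors meets the target, via a further use of Theorem \ref{reflexivsatz} to bound $v_p([f^{n_0}\pi^k]_{-p\mathbf z})$ against the ``degree'' $n_0+kp$ — is the technical heart of the argument and the step I expect to be the main obstacle: the naive multiplicativity constant $a(n_0)$ is \emph{not} the correct $p$‑adic local factor, so the lower congruences must be fed back in and the error terms must be made to cancel against each other rather than to vanish individually.
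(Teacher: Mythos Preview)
Your route is genuinely different from the paper's, and the paper's method sidesteps precisely the obstacle you flag at the end. The paper does \emph{not} argue by induction on $s$. Instead it introduces, for each $n$, polynomials $g_{n,k}$ defined by $f(X)^{np^s}=\sum_{k=0}^s p^k g_{n,k}(X^{p^{s-k}})$ (so $g_{n,0}=f^n$ and $p^k g_{n,k}(X)=f(X)^{np^k}-\sum_{j<k}p^j g_{n,j}(X^{p^{k-j}})$), expands both sides of the congruence as sums over index tuples $I=(i_0,\dots,i_s)$, $J=(j_1,\dots,j_{s-1})$ of products $p^{I+J}\,G(0,s;I)\,G(1,s-1;J+1)$ with $G(a,b;I)=\bigl[\prod_{k=a}^b g_{n_k,i_k}(X^{p^{k-i_k}})\bigr]_0$, and then constructs a \emph{bijection} between the surviving terms (those with $\sum i_k+\sum j_k\le s-1$) on the two sides. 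The bijection comes from a ``splitting'' phenomenon: whenever $k-i_k\ge\ell$ for all $k\ge\ell$, the constant term $G(a,b;I)$ factors as $G(a,\ell-1;I)\,G(\ell,b;I)$, and this is proved by exactly your vanishing principle (Theorem~\ref{reflexivsatz}). Three short combinatorial lemmas then guarantee that for every pair $(I,J)$ with $\sum i_k+\sum j_k\le s-1$ there is a common splitting position $\nu$, and swapping the tails at $\nu$ produces the matching term on the other side. No induction on $s$, no error propagation: each term is matched exactly.

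Your plan, by contrast, strengthens to a coefficient-level statement $\mathcal{T}(s)$ and tries to induct. The reduction to $\mathcal{T}(s)$ is clean and the $p$-adic bound $p^{\,s-1-j}\mid[f^{n_sp^{s-1}}]_{\mathbf u}$ is correct. The gap is exactly where you say it is, and it is structural: the ``commutator'' you isolate,
\[
a(\bar N)\,[f^{\bar N-k}]_{\mathbf z+\mathbf w}\;-\;[f^{\bar N}]_{\mathbf w}\,[f^{\bar N-k}]_{\mathbf z},
\]
is \emph{not} of the shape controlled by $\mathcal{T}(s-1)$, which compares $[f^M]_{p\mathbf v}$ against $[f^{\lfloor M/p\rfloor}]_{\mathbf v}$, i.e.\ it relates $M$ to $\lfloor M/p\rfloor$, not $\bar N$ to $\bar N-k$ for small $k$. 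So the inductive hypothesis, as formulated, does not feed back into the error term; you would need a further two-variable statement relating $[f^{M}]_{\mathbf a}[f^{M'}]_{\mathbf b}$ to $[f^{M}]_{\mathbf b}[f^{M'}]_{\mathbf a}$ modulo suitable powers, and it is not clear such a statement holds with the right exponent. In particular the hoped-for extra gain from ``$v_p([f^{n_0}\pi^k]_{-p\mathbf z})$ against the degree $n_0+kp$'' does not materialise: $n_0+kp$ can exceed $p$ already for $k=1$, so Theorem~\ref{reflexivsatz} gives no vanishing for $[f^{n_0}\pi]_{-p\mathbf z}$ beyond $\mathbf z=0$, and the commutator at $\mathbf z=0$ is $a(\bar N)[f^{\bar N-k}]_{\mathbf w}-[f^{\bar N}]_{\mathbf w}[f^{\bar N-k}]_0$, which again lies outside the reach of $\mathcal{T}(s-1)$. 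The paper's bijective argument avoids this entirely by never comparing $f^{\bar N}$ with $f^{\bar N-k}$; all comparisons are between identical factors reshuffled across the two sides.
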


We remark that already for the simplest cases where the the Newton polyhedron contains
more than one lattice point, like $f=X^2+X^{-1}$, the coefficients $a(n)$ do not
satisfy such simple congruences.
 
\section{Proof for the congruence $\mod p$ }
For $s=1$ we have to show that for all $n_0 \le p-1$ 
\[a(n_0+n_1 p) \equiv a(n_0) a(n_1) \mod p,\]

The proof we will give is completely elementary; the key ingredient is theorem 
\ref{reflexivsatz}, which states that
for all non-negative integral $\ell=(\ell_1,...,\ell_m)$ one has,
\[\gcd_{i=1,...,n}(\sum_{j=1}^ma_{i,j}\ell_j)\leq \sum_{j=1}^m\ell_j\]

\begin{prop} Let $f$ be a Laurent polynomial as above and $n_0 < p$. Then
\[\left[f^{n_0}f^{n_1p}\right]_0\equiv \left[f^{n_0}\right]_0\left[f^{n_1}\right]_0\mod p.\]
\end{prop}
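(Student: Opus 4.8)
The plan is to reduce the claim modulo $p$ via the Frobenius congruence for $f$, expand the resulting constant term by the multinomial theorem, and then appeal to Theorem \ref{reflexivsatz} to see that the only terms surviving the constraint are exactly those that assemble into $[f^{n_0}]_0[f^{n_1}]_0$.

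First I would record the elementary identity, valid in $\Z[X_1^{\pm 1},\dots,X_n^{\pm 1}]$,
\[ f(X)^p \equiv \sum_{j=1}^m c_j^p X^{p\mathbf{a}_j} \equiv \sum_{j=1}^m c_j X^{p\mathbf{a}_j} = f(X^p) \mod p, \]
which follows from $p \mid \binom{p}{\mathbf{k}}$ for every non-unit multi-index $\mathbf{k}$ together with $c_j^p \equiv c_j \mod p$. Since reduction mod $p$ is a ring homomorphism, raising to the $n_1$-th power gives $f^{n_1 p} = (f^p)^{n_1} \equiv f(X^p)^{n_1} \mod p$, so on the level of constant terms
\[ [f^{n_0} f^{n_1 p}]_0 \equiv [f(X)^{n_0}\, f(X^p)^{n_1}]_0 \mod p. \]

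Next I would expand the right-hand side multinomially. With $|\mathbf{k}| = \sum_j k_j$, $c^{\mathbf{k}} = \prod_j c_j^{k_j}$ and $\mathcal{A}\mathbf{k} = \sum_j k_j \mathbf{a}_j$, one has $f(X)^{n_0} = \sum_{|\mathbf{k}|=n_0} \binom{n_0}{\mathbf{k}} c^{\mathbf{k}} X^{\mathcal{A}\mathbf{k}}$ and $f(X^p)^{n_1} = \sum_{|\mathbf{l}|=n_1} \binom{n_1}{\mathbf{l}} c^{\mathbf{l}} X^{p\mathcal{A}\mathbf{l}}$, hence
\[ [f(X)^{n_0} f(X^p)^{n_1}]_0 = \sum \binom{n_0}{\mathbf{k}} \binom{n_1}{\mathbf{l}} c^{\mathbf{k}+\mathbf{l}}, \]
the sum taken over all non-negative integral $\mathbf{k},\mathbf{l}$ with $|\mathbf{k}|=n_0$, $|\mathbf{l}|=n_1$ and $\mathcal{A}\mathbf{k} + p\,\mathcal{A}\mathbf{l} = 0$. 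The pairs with $\mathcal{A}\mathbf{k}=0$ automatically have $\mathcal{A}\mathbf{l}=0$, and their joint contribution is precisely $[f^{n_0}]_0 [f^{n_1}]_0$; so everything comes down to showing that no admissible pair has $\mathcal{A}\mathbf{k} \neq 0$.

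This is the one substantive step, and it is exactly where Theorem \ref{reflexivsatz} enters. If $\mathcal{A}\mathbf{k} + p\,\mathcal{A}\mathbf{l} = 0$ with $\mathcal{A}\mathbf{k} \neq 0$, then each component of $\mathcal{A}\mathbf{k}$ is divisible by $p$, so $\gcd_i (\mathcal{A}\mathbf{k})_i$ is a positive integer divisible by $p$ and therefore $\ge p$; but $\mathbf{k}$ is a non-negative integral vector with $\mathcal{A}\mathbf{k} \neq 0$, so Theorem \ref{reflexivsatz} bounds that $\gcd$ by $|\mathbf{k}| = n_0 < p$, a contradiction. Note that no hypothesis on $|\mathbf{l}| = n_1$ is needed, which is essential since $n_1$ may exceed $p$. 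Thus the cross terms are vacuous, $[f(X)^{n_0} f(X^p)^{n_1}]_0 = [f^{n_0}]_0 [f^{n_1}]_0$ holds exactly, and together with the Frobenius reduction this yields the claimed congruence. I do not expect any real obstacle beyond this appeal to Theorem \ref{reflexivsatz}: the point is simply that its inequality is just sharp enough, the hypothesis $n_0 < p$ being what makes the $\gcd$ bound bite.
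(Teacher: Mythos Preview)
Your proof is correct and follows essentially the same route as the paper: reduce via the Frobenius congruence $f^{n_1p}\equiv f(X^p)^{n_1}\bmod p$, then show the exact equality $[f^{n_0}(X)f(X^p)^{n_1}]_0=[f^{n_0}]_0[f^{n_1}]_0$ by using Theorem~\ref{reflexivsatz} to rule out any contribution with $\mathcal{A}\mathbf{k}\neq 0$. The paper phrases the last step in terms of monomials rather than your explicit multinomial expansion, but the argument is identical.
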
 

\begin{proof}
As $f$ has integral coefficients, we have $f^{n_1p}(X)\equiv f^{n_1}(X^p) \mod p$.
So the congruence is implied by the equality
\[\left[f^{n_0}(X)f^{n_1}(X^p)\right]_0=\left[f^{n_0}(X)\right]_0\left[f^{n_1}(X)\right]_0,\]
which means: the product of a monomial from $f^{n_0}(X)$ and a monomial from $f^{n_1}(X^p)$ can never be constant, unless the two monomials are constant themselves.
It is this statement that we will prove now.\\
For the product of a non-constant monomial from $f^{n_0}(X)$ and a non-constant monomial from $f^{n_1}(X^p)$ to be constant, the monomial coming from $f^{n_0}(X)$ has to be a monomial in $X_1^p,...,X_n^p$, since all monomials in $f^{n_1}(X^p)$ are monomials in $X_1^p,...,X_n^p$.\\
A monomial 
\[M:=X^{\ell_1{\bf a}_1+\ell_2 {\bf a}_2+\ldots+\ell_m {\bf a}_m}=\prod_{j=1}^mX_1^{a_{1,j}\ell_j}...X_n^{a_{n,j}\ell_j}\] appearing in $f^{n_0}(X)$ corresponds to a partition 
\[n_0=\ell_1+...+\ell_m\] of $n_0$ in non-negative integers $\ell_i$.
If $M$ were a monomial in $X_1^p,...,X_n^p$, then we would have the divisibility
\[p \mid \sum_{j=1}^m a_{i,j}\ell_j \; \textnormal{for} \; 1 \leq i \leq n,\]
and hence
\[p \mid \gcd_{i=1,...,n}(\sum_{j=1}^ma_{i,j}\ell_j).\]
On the other hand, by \ref{reflexivsatz} we have
 \[\gcd_{i=1,...,n}(\sum_{j=1}^m a_{i,j}\ell_j)\leq \sum_{j=1}^m\ell_j=n_0<p.\]
So we conclude that $\sum_{i=1}^ma_{i,j}\ell_j=0$ for $1 \leq j \leq n$ and that the monomial 
$M$ is the constant monomial $X^0$.
Hence it follows that
\[\left[f^{n_0}(X)f^{n_1}(X^p)\right]_0=\left[f^{n_0}(X)\right]_0\left[f^{n_1}(X^p)\right]_0,\] and since
\[\left[f^{n_1}(X^p)\right]_0=\left[f^{n_1}(X)\right]_0,\]
the proposition follows.
\end{proof}

We remark that the congruence has the following interpretation.
By a result of \cite{DvK} (Theorem 4.) one can compactify the map $f:(\C^*)^n \lra \C$
given by the Laurent polynomial to a map $\phi:{\sX} \lra \P^1$ such that
the differential form $\Omega$ extends to a form in $\Omega^n(({\sX} \setminus \phi^{-1}(\{\infty\})))$. In the case $\Delta(f)$ is reflexive one has 
\[\deg(\pi_*\omega_{X/S})=1\]
see \cite{DK}, $(8.3)$.
On the other hand, from this and under an additional condition 
$(R)$, it follows from  \cite{yu} corollary 3.7 that the$\mod p$ 
Dwork-congruences hold.

\section{Strategy for higher $s$}
The idea for the higher congruences is basically the {\em same as for $s=1$}, but is 
combinatorically more involved. Surprisingly, one does not need any
statements stronger than \ref{reflexivsatz}.
To prove the congruence \ref{cong1}, we have to show that
\begin{equation} \left[\prod_{k=0}^sf^{n_k p^k}\right]_0\left[\prod_{k=1}^{s-1}f^{n_k p^{k-1}}\right]_0\equiv \left[\prod_{k=0}^{s-1}f^{n_kp^k}\right]_0\left[\prod_{k=1}^sf^{n_kp^{k-1}}\right]_0 \mod p^s.\label{cong1neu}\end{equation}
 To do this, we will use the following expansion of $f^{np^s}(X)$:
\begin{prop}We can write \begin{equation*}f^{np^s}(X)=\sum_{k=0}^sp^kg_{n,k}(X^{p^{s-k}}),\end{equation*}
where $g_{n,k}$ is a polynomial of degree $np^k$ in the monomials of $f$, independent of $s$, defined inductively by $g_{n,0}(X)=f^n(X)$ and
\begin{equation}\label{fentwicklung}p^kg_{n,k}(X):=f(X)^{np^k}-\sum_{j=0}^{k-1}p^jg_{n,j}(X^{p^{k-1-j}}).\end{equation}
\end{prop}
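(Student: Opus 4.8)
The plan is to carry out the whole computation in the polynomial ring $\Z[Y_1,\dots,Y_m]$ whose variables $Y_j$ are placeholders for the $m$ monomials $X^{{\bf a}_j}$ of $f$. Then $f$ becomes the linear form $\ell:=\sum_{j=1}^m c_jY_j$, and the operation $X\mapsto X^{p^e}$ on Laurent polynomials corresponds to the ring operation $\sigma_e\colon Y_j\mapsto Y_j^{p^e}$, since $X^{p^e{\bf a}_j}=(X^{{\bf a}_j})^{p^e}$. In this language the proposition asserts the existence of $g_{n,k}\in\Z[Y]$, homogeneous of degree $np^k$ and independent of $s$, with $g_{n,0}=\ell^n$, obeying the recursion \eqref{fentwicklung} (in which the inner substitution is to be read as $\sigma_{k-j}$, i.e. the subtracted sum is the level $k-1$ expansion evaluated at $\sigma_1Y$), and such that
\[
\ell^{\,np^s}=\sum_{k=0}^{s}p^k\,g_{n,k}(\sigma_{s-k}Y).
\]
Substituting $Y_j\mapsto X^{{\bf a}_j}$ at the end returns the statement for $f$, and ``degree $np^k$ in the monomials of $f$'' is exactly $\deg g_{n,k}=np^k$.

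The only arithmetic input is the elementary congruence: in any commutative ring, $a\equiv b\bmod p^t$ forces $a^p\equiv b^p\bmod p^{t+1}$ (write $a=b+p^tc$ and expand $(b+p^tc)^p$; every term past $b^p$ carries a factor $p^{1+it}$ with $1+it\ge t+1$). Over $\Z[Y]$ one has the Frobenius relation $\ell^{\,p}\equiv\sigma_1\ell\bmod p$ (expand and use $c_j^p\equiv c_j$), the same relation used mod $p$ in Section 4. Iterating the congruence $k-1$ times and then raising to the $n$-th power yields
\[
p^k\ \big|\ \ell^{\,np^k}-(\sigma_1\ell)^{\,np^{k-1}}\qquad\text{in }\Z[Y],
\]
which is exactly what makes \eqref{fentwicklung} well posed.

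I would then run a single induction on $s$, establishing simultaneously that: (i) the element $g_{n,s}$ produced by \eqref{fentwicklung} from $g_{n,0},\dots,g_{n,s-1}$ lies in $\Z[Y]$ and is homogeneous of degree $np^s$; and (ii) the displayed expansion holds at level $s$. The base case $s=0$ is the definition $g_{n,0}=\ell^n$. For the step, note that $g_{n,j}(\sigma_{s-j}Y)=g_{n,j}\big(\sigma_{s-1-j}(\sigma_1Y)\big)$, so applying (ii) at level $s-1$ with $Y$ replaced by $\sigma_1Y$ shows that the subtracted sum in \eqref{fentwicklung} equals $(\sigma_1\ell)^{\,np^{s-1}}$; hence $p^sg_{n,s}=\ell^{\,np^s}-(\sigma_1\ell)^{\,np^{s-1}}$. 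The right-hand side is divisible by $p^s$ in $\Z[Y]$ by the congruence above and is homogeneous of degree $np^s$ (both $\ell^{\,np^s}$ and $(\sigma_1\ell)^{\,np^{s-1}}$ are), so $g_{n,s}\in\Z[Y]$ is homogeneous of degree $np^s$; this is (i). Substituting this back into $\sum_{k=0}^sp^kg_{n,k}(\sigma_{s-k}Y)=p^sg_{n,s}+\sum_{k=0}^{s-1}p^kg_{n,k}(\sigma_{s-k}Y)$, and recognising the last sum as the level $s-1$ expansion at $\sigma_1Y$, i.e. $(\sigma_1\ell)^{\,np^{s-1}}$, the two correction terms cancel and the total is $\ell^{\,np^s}$, which is (ii). Independence of $s$ is manifest from \eqref{fentwicklung}.

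I do not expect a serious obstacle. All the $p$-adic content sits in the one-line congruence lemma; what remains is the bookkeeping of the two substitution operations — $\sigma_{s-k}$ in the expansion versus $\sigma_{s-j}=\sigma_{s-1-j}\circ\sigma_1$ in the recursion — and once these are aligned by reindexing at level $s-1$ and passing to the variables $\sigma_1Y$, the induction is a short cancellation. The one point that genuinely needs care is keeping the divisibility assertion inside $\Z[Y]$ rather than merely inside the ring of Laurent polynomials in $X$ (whose monomials may satisfy multiplicative relations), so that $g_{n,k}$ really is a polynomial in the monomials of $f$; but the congruence lemma already delivers divisibility in $\Z[Y]$, so this costs nothing.
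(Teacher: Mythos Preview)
Your argument is correct and follows essentially the same route as the paper: both recognise that the subtracted sum in the recursion equals $f(X^p)^{np^{k-1}}$ via the level-$(k{-}1)$ expansion evaluated at $X^p$, and then invoke the congruence $f(X)^{np^k}\equiv f(X^p)^{np^{k-1}}\bmod p^k$ (your ``lifting'' lemma, the paper's (\ref{copoly})) to get the required divisibility by $p^k$. Your passage to $\Z[Y_1,\dots,Y_m]$ is a clean way to make rigorous the phrase ``polynomial in the monomials of $f$'' (which the paper leaves implicit), and you have correctly read the inner substitution in \eqref{fentwicklung} as $X^{p^{k-j}}$ rather than the printed $X^{p^{k-1-j}}$; the paper's own proof uses $X^{p^{k-j}}$, so this is simply a typo in the statement.
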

\begin{proof}
We have to prove that
the right-hand side of equation \ref{fentwicklung} is  divisible by $p^k$.
This is proved  by induction on $k$ and an application of  the congruence
\begin{equation}\label{copoly}f(X)^{p^m}\equiv f(X^p)^{p^{m-1}}\mod p^m. \end{equation}
For $k=1$, the divisibility follows directly by (\ref{copoly}). Assume that the statement is true for $m \leq k-1$.
Write 
$f(X)^{np^{k-1}}=\sum_{j=0}^{k-1}p^jg_{n,j}(X^{p^{k-1-j}}).$
Then,
$\sum_{j=0}^{k-1}p^jg_{n,j}(X^{p^{k-j}})=f(X^p)^{np^{k-1}}\equiv f(X)^{np^k}\mod p^n,$
and thus
$f(X)^{np^k}-\sum_{j=0}^{k-1}p^jg_{n,j}(X^{p^{k-j}})\equiv 0 \mod p^n$.\end{proof}

The congruences involve constant term expressions of the form
\begin{eqnarray}
\left[\prod_{k=a}^bf^{n_k p^k}\right]_0 & = & \left[\prod_{k=a}^b\sum_{j=0}^kp^jg_{n_k,j}(X^{p^{k-j}})\right]_0\nonumber\\
&=&\sum_{i_a\leq a}...\sum_{i_b \leq b}p^{\sum_{k=a}^bi_k}\left[\prod_{k=a}^bg_{n_k,i_k}(X^{p^{k-i_k}})\right]_0.\label{prodsumme}
\end{eqnarray}

Thus, equation (\ref{cong1neu}) translates into
\begin{small}
\begin{eqnarray}&&\label{congneu}
\sum_{i_0\leq 0}...\sum_{i_s \leq s}\sum_{j_1\leq 0}...\sum_{j_{s-1}\leq s-2}p^{\sum_{k=0}^si_k+\sum_{k=1}^{s-1}j_k}\left[\prod_{k=0}^sg_{n_k,i_k}(X^{p^{k-i_k}})\right]_0\left[\prod_{k=1}^{s-1}g_{n_k,j_k}(X^{p^{k-1-j_k}})\right]_0\nonumber\\&\equiv&\nonumber \\
&&\sum_{i_0\leq 0}...\sum_{i_{s-1} \leq s-1}\sum_{j_1\leq 0}...\sum_{j_{s}\leq s-1}p^{\sum_{k=0}^{s-1}i_k+\sum_{k=1}^{s}j_k}\left[\prod_{k=0}^{s-1}g_{n_k,i_k}(X^{p^{k-i_k}})\right]_0\left[\prod_{k=1}^{s}g_{n_k,j_k}(X^{p^{k-1-j_k}})\right]_0\nonumber\\
&&\mod p^s
\end{eqnarray}\end{small} 

Since this congruence is supposed to hold modulo $p^s$, on the left-hand side, 
only the summands with $\sum_{k=0}^si_k+\sum_{k=1}^{s-1}l_k\leq s-1$ contribute,
 and on the right-hand side, only those with $\sum_{k=0}^{s-1}i_k+\sum_{k=1}^sl_k \leq s-1$ play a role.\\
Now, we proceed by comparing these summands on both sides of equation 
\ref{cong1neu}. We will prove that each summand on the right-hand side 
is equal to exactly one summand on the left-hand side and vice versa.

\section{Splitting positions}

So we are led to study for $a \le b$ expressions of the type 
\[G(a,b;I) :=\left[\prod_{k=a}^{b}g_{n_k,i_k}(X^{p^{k-i_k}})\right]_0\]
where the $0 \le n_k \le p-1$ are fixed for $a \leq k \leq b$ and 
$I:=(i_a,...,i_b)$ is a sequence with $0 \le i_k \le k$. 

\begin{definition} We say that $G(a,b;I)$ {\em splits at $\ell$} if
\[G(a,b;I)=G(a,\ell-1;I)G(\ell,b;I)\] 
\end{definition}

The number of entries of $I$ is determined implicitly by $a$ and $b$, so that by $G(a,\ell-1;I)$ we mean the expression corresponding to the sequence $(i_a,...,i_{\ell-1})$, while by $G(\ell,b;I)$, we mean the expression corresponding to $(i_{\ell},...,i_b)$. Note that $\ell=a$ represents a trivial splitting, but splitting at $\ell=b$ is a non-trivial
property.

\begin{prop}\label{spalten}
If $k-i_k \ge \ell$ for all $k \ge \ell$, then $G(a,b;I)$ splits at $\ell$.
\end{prop}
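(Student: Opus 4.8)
The plan is to show that the constant-term expression factors because the two blocks of factors live in "disjoint" variable scales. Recall that $G(a,b;I)=\left[\prod_{k=a}^{b}g_{n_k,i_k}(X^{p^{k-i_k}})\right]_0$, and that each $g_{n_k,i_k}$ is a polynomial of degree $n_k p^{i_k}$ in the monomials of $f$. Write the product as $P\cdot Q$ where $P=\prod_{k=a}^{\ell-1}g_{n_k,i_k}(X^{p^{k-i_k}})$ and $Q=\prod_{k=\ell}^{b}g_{n_k,i_k}(X^{p^{k-i_k}})$. The hypothesis $k-i_k\ge\ell$ for all $k\ge\ell$ says that every monomial appearing in $Q$ is a monomial in $X_1^{p^\ell},\dots,X_n^{p^\ell}$. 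So the claim reduces, exactly as in the $s=1$ case, to: the product of a monomial $M_P$ from $P$ and a non-constant monomial $M_Q$ from $Q$ can only be constant if $M_P$ is itself a monomial in $X_1^{p^\ell},\dots,X_n^{p^\ell}$ — and then both $M_P$ and $M_Q$ being constant is forced by Theorem \ref{reflexivsatz}.

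The key estimate is a bound on the total $f$-degree of the monomials occurring in $P$. A monomial of $P$ is of the form $X^{\,\sum_j \ell_j {\bf a}_j}$ for a non-negative integer vector $(\ell_1,\dots,\ell_m)$ with $\sum_j \ell_j \le \sum_{k=a}^{\ell-1} n_k p^{i_k}$, since $g_{n_k,i_k}(X^{p^{k-i_k}})$ contributes a monomial of $f$-degree at most $n_k p^{i_k}$ (the substitution $X\mapsto X^{p^{k-i_k}}$ does not change the $f$-degree). Using $i_k\le k$ and $n_k\le p-1$, one gets $\sum_j\ell_j \le (p-1)\sum_{k=a}^{\ell-1}p^{k} \le (p-1)\sum_{k=0}^{\ell-1}p^{k} = p^{\ell}-1 < p^{\ell}$. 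If such a monomial of $P$ were a monomial in $X_1^{p^\ell},\dots,X_n^{p^\ell}$, then $p^\ell \mid \sum_j a_{i,j}\ell_j$ for every $i$, hence $p^\ell$ divides the gcd over $i$ of these sums; but by Theorem \ref{reflexivsatz} that gcd is $\le\sum_j\ell_j < p^\ell$, forcing $\sum_j a_{i,j}\ell_j=0$ for all $i$, i.e. the monomial is $X^0$. Consequently the only monomial of $P$ that can combine with a non-constant monomial of $Q$ to give a constant is the constant monomial of $P$ itself, so a fortiori $\left[P\cdot Q\right]_0 = [P]_0\,[Q]_0 = G(a,\ell-1;I)\,G(\ell,b;I)$, which is the splitting at $\ell$.

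The main obstacle is the degree bookkeeping: one must be careful that "degree $n_k p^{i_k}$ in the monomials of $f$" genuinely translates into a partition of an integer $\le n_k p^{i_k}$ of the exponent vector into columns ${\bf a}_j$, so that Theorem \ref{reflexivsatz} applies verbatim, and that the geometric-series bound $(p-1)\sum_{k=0}^{\ell-1}p^k = p^\ell - 1$ leaves us strictly below $p^\ell$ (this strictness is exactly what makes the gcd argument bite). Once the inequality $\sum_j\ell_j < p^\ell$ is in hand, the rest is the same one-line contradiction via \ref{reflexivsatz} used for $s=1$; no stronger input is needed, which matches the remark opening the "Strategy for higher $s$" section.
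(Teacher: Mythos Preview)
Your approach is exactly the paper's: write the product as $P\cdot Q$, observe that the hypothesis forces $Q$ to be a Laurent polynomial in $X_1^{p^\ell},\dots,X_n^{p^\ell}$, and then use Theorem~\ref{reflexivsatz} together with the strict bound $\sum_j\ell_j<p^\ell$ to force any matching monomial of $P$ to be constant.

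There is, however, a bookkeeping slip in precisely the place you flagged as ``the main obstacle'': the parenthetical claim that the substitution $X\mapsto X^{p^{k-i_k}}$ does not change the $f$-degree is false. A monomial of $g_{n_k,i_k}(X)$ has exponent $\sum_j\beta_{j,k}\,{\bf a}_j$ with $\sum_j\beta_{j,k}=n_kp^{i_k}$; after the substitution its exponent becomes $p^{k-i_k}\sum_j\beta_{j,k}\,{\bf a}_j$, so when you express the exponent of a monomial of $P$ as $\sum_j\ell_j{\bf a}_j$ you get $\ell_j=\sum_{k=a}^{\ell-1}p^{k-i_k}\beta_{j,k}$ and hence
\[
\sum_j\ell_j=\sum_{k=a}^{\ell-1}p^{k-i_k}\cdot n_kp^{i_k}=\sum_{k=a}^{\ell-1}n_kp^{k},
\]
not $\sum_{k=a}^{\ell-1}n_kp^{i_k}$ as you wrote. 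Fortunately this is the \emph{same} final bound you arrive at (and the one the paper uses): $\sum_j\ell_j\le(p-1)\sum_{k=a}^{\ell-1}p^k=p^\ell-p^a<p^\ell$. So the argument survives unchanged once the intermediate estimate is corrected; note in particular that the inequality $i_k\le k$ is not actually needed here, since the $i_k$ cancel in the product $p^{k-i_k}\cdot p^{i_k}$.
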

\begin{proof}
A monomial $\prod_{j=1}^m(X^{p^{k-i_k}})^{{\bf a}_j \beta_{j,k}}$ occuring in
 $g_{n_k,i_k}(X^{p^{k-i_k}})$ corresponds to a partition
\[\beta_{1,k}+...+\beta_{m,k}=p^{i_k}n_k   \leq p^{i_k+1}-p^{i_k}\]
of the number $p^{i_k}n_k$ in non-negative integers $\beta_{1,k},...,\beta_{m,k}$. So we have
\[p^{k-i_k}(\beta_{1,k}+...+\beta_{m,k}\leq p^{k+1}-p^k.\]

It follows from the assumptions that the product
 $G(\ell,b;I)=\prod_{k=\ell}^bg_{n_k,i_k}(X^{p^{k-i_k}})$ is a Laurent-polynomial in $X^p$.  
As a consequence, the product of a monomial in $G(a,\ell-1;I)=\prod_{k=a}^{\ell-1}g_{n_k,i_k}(X^{p^{k-i_k}})$ and a monomial of $G(\ell,b;I)$ can be constant
only if the sum

\[
m_i:=\sum_{j=1}^mp^{a-i_a}a_{i,j}\beta_{j,a}+...+\sum_{j=1}^mp^{\ell-1-i_{\ell-1}}a_{i,j}\beta_{j,\ell-1}
\]
is divisible by $p^{\ell}$ for $1 \leq i \leq n$.

Set
\[ \gamma_j :=p^{a-i_a}\beta_{j,a}+...+p^{\ell-1-i_{\ell-1}}\beta_{j,\ell -1}   \]
so that
\[\sum_{j=1}^m a_{i,j}\gamma_j=m_i\]

 It follows that
\begin{small}
\[\sum_{j=1}^m\gamma_j=\sum_{j=1}^mp^{a-i_a}\beta_{j,a}+...+\sum_{j=1}^mp^{\ell -1-i_{\ell -1}}\beta_{j,\ell -1}\leq p^{a+1}-p^a+...+p^{\ell}-p^{\ell -1}=p^{\ell}-p^a<p^{\ell}.\] \end{small} 
Hence, it follows that
\[p^{\ell} \mid \gcd_{i=1,...,n}(\sum_{j=1}^ma_{i,j}\gamma_j)\leq \sum_{j=1}^m\gamma_j<p^{\ell},\] where the first inequality follows from
 Theorem \ref{reflexivsatz}. This implies $\sum_{j=1}^ma_{i,j} \gamma_j=0$ for $1 \leq i \leq n$. But this means that the monomial in  $\prod_{k=t}^{s-1}g_{n_k,i_k}(X^{p^{k-i_k}})$ is itself constant.
\end{proof}
Now that we know that we can split up expressions $G(a,b;I)$ satisfying the condition given in Proposition \ref{spalten}, we proceed by proving that all the summands on both sides of equation \ref{congneu} that do not have a coefficient divisible by $p^s$ satisfy this splitting condition.\\

\section{Three combinatorical Lemmas}

In this section, we prove three simple combinatorical lemmas which will be applied to split up expressions $G(0,s;I)G(1,s-1;J+1)$ that occur in the congruence (\ref{cong1neu}).

\begin{definition} Let $ a \le b$ and $I=(i_a,i_{a+1},\ldots,i_b)$ a sequence
with $0 \le i_k \le k$ for all $k$ with $a \le k \le b$.
We say that $\ell$ is a {\em splitting index for $I$} if $\ell >a$ and for $k \ge \ell$ one has
\[i_k \le k-\ell.\]
Remark that for a splitting index $\ell$ one can apply \ref{spalten} and that
$i_{\ell}=0$. 
\end{definition}

\begin{lemma}\label{spaltbedingung}
Let $I$ as above and assume that
\[\sum_{k=a}^b i_k \leq b-a-1.\]
Then there exists at least one splitting index for $I$.

\end{lemma}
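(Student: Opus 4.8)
The statement to prove is: given a sequence $I=(i_a,\ldots,i_b)$ with $0\le i_k\le k$ and $\sum_{k=a}^b i_k\le b-a-1$, there is a splitting index $\ell$, i.e.\ some $\ell$ with $a<\ell\le b$ such that $i_k\le k-\ell$ for all $k\ge\ell$. The plan is to produce $\ell$ explicitly by a greedy/extremal choice and then verify the defining inequality. I would define
\[
\ell := \max\{\, k : a< k \le b,\ i_j \le j-k \text{ for all } j \text{ with } k\le j\le b \,\}\cup\{?\}
\]
but it is cleaner to run the argument the other way: scan the indices from the top down and look for the first place where the ``budget'' runs out. Concretely, set $\ell$ to be the largest index $k$ in the range $(a,b]$ for which
\[
\sum_{j=k}^{b} i_j \le b-k,
\]
if such a $k$ exists; I expect this to be the right $\ell$, and the first task is to show such a $k$ exists at all.

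\textbf{Existence of the candidate.} For $k=a+1$ we have $\sum_{j=a+1}^b i_j \le \sum_{j=a}^b i_j \le b-a-1 = b-(a+1)$, so the condition $\sum_{j=k}^b i_j\le b-k$ holds at $k=a+1$; hence the set of valid $k$ is nonempty and $\ell$ is well defined with $a<\ell\le b$. (If $b=a+1$, then the hypothesis forces $i_a+i_b\le 0$, so $i_a=i_b=0$, and $\ell=b=a+1$ works trivially; so assume $b>a+1$.)

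\textbf{Verifying the splitting inequality.} Now I must check $i_k\le k-\ell$ for every $k$ with $\ell\le k\le b$. Fix such a $k$. By maximality of $\ell$, every index $k'$ with $\ell<k'\le b$ fails the budget condition, i.e.\ $\sum_{j=k'}^b i_j \ge b-k'+1$. The key step is to combine the inequality at $\ell$ (which holds) with the failures at $\ell+1,\ell+2,\ldots$ to bound $i_k$. Specifically, from $\sum_{j=\ell}^b i_j\le b-\ell$ and $\sum_{j=k+1}^b i_j\ge b-(k+1)+1=b-k$ (valid when $k+1\le b$; if $k=b$ handle separately using $\sum_{j=\ell}^b i_j\le b-\ell$ and $\sum_{j=\ell+1}^b i_j\ge b-\ell$ to get $i_\ell\le 0$, hmm — need care) we subtract to get
\[
i_\ell + i_{\ell+1}+\cdots+i_k = \sum_{j=\ell}^b i_j - \sum_{j=k+1}^b i_j \le (b-\ell)-(b-k) = k-\ell.
\]
Since all $i_j\ge 0$, this gives in particular $i_k\le k-\ell$, which is exactly what we need. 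The case $k=b$: if $\ell<b$, apply maximality to $k'=\ell+1$ when $\ell+1\le b$ — actually the displayed telescoping with $k=b$ reads $\sum_{j=\ell}^b i_j\le b-\ell$ directly, and nonnegativity gives $i_b\le b-\ell$. The case $k=\ell$ is $i_\ell\le 0$, which needs the failure at $\ell+1$: $\sum_{j=\ell+1}^b i_j\ge b-\ell$, subtracted from $\sum_{j=\ell}^b i_j\le b-\ell$ yields $i_\ell\le 0$, hence $i_\ell=0$, consistent with the remark in the definition. (If $\ell=b$ then there is no $k>\ell$ and only $k=\ell$ to check, and then $\ell=b$ being in the valid set means $i_b\le 0$, so $i_b=0$.)

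\textbf{Main obstacle.} The argument is essentially bookkeeping, so the only real danger is boundary/edge cases: when $\ell=b$ (no indices strictly above $\ell$, so ``maximality'' gives no failure inequality to use), when $k=b$, and when the valid set for $\ell$ is a single point. I would organize the write-up by first isolating the telescoping identity $\sum_{j=\ell}^k i_j = \sum_{j=\ell}^b i_j-\sum_{j=k+1}^b i_j$, then feeding in the ``$\le$ at $\ell$'' and ``$\ge$ at $k+1$'' inequalities, and finally noting that for $k=\ell$ the relevant bound is $i_\ell\le 0$ (forced to be $=0$), matching the remark that $i_\ell=0$ at any splitting index. Everything else is routine.
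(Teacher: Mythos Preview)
Your argument is correct and takes a genuinely different route from the paper. The paper argues by contradiction: it notes that at least two of the $i_k$ must vanish (since the $b-a+1$ nonnegative summands add to at most $b-a-1$), then assumes that no zero index $k>a$ is a splitting index. Each such $k$ is then assigned a \emph{violating index} $\nu>k$ with $i_\nu>\nu-k$; counting the contribution of the violating indices (an index that violates $m$ zeros must have $i_\nu\ge m+1$) forces $\sum_k i_k \ge b-a$, contradicting the hypothesis.

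By contrast, you construct the splitting index explicitly as the largest $\ell\in(a,b]$ with $\sum_{j\ge\ell} i_j\le b-\ell$, and then verify the splitting condition by the telescoping estimate
\[
\sum_{j=\ell}^{k} i_j \;=\; \sum_{j=\ell}^{b} i_j \;-\; \sum_{j=k+1}^{b} i_j \;\le\; (b-\ell)-(b-k)\;=\;k-\ell,
\]
using maximality of $\ell$ to bound the subtracted tail from below. This is shorter, more direct, and actually produces a specific $\ell$; the only delicate points are the boundary cases $k=b$ and $\ell=b$, which you handle correctly. The paper's approach, while more roundabout, foregrounds the zero indices of $I$ and so connects more visibly with the remark that $i_\ell=0$ at any splitting index; your argument recovers $i_\ell=0$ as a consequence of the telescoping at $k=\ell$.
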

\begin{proof}
Let $\sN:=\{k\;\;|i_k=0\}$ be the set of all indices $k$ such that the 
corresponding $i_k$ is zero. 
Since the sum has $b-a+1$ summands $i_k$, the set $\sN$ has at least two 
elements. So there exists at least one index $k \not=a$ such that $i_k=0$.\\
We will show by contradiction that one of these zero-indices is a splitting index.\\
We say that $\nu>k$ is a {\em violating index} with respect to $k \in \sN$ if $i_{\nu}>\nu-k $. Assume now that all $k \in \sN$ posses a violating index.\\
It follows directly that for each violating index $\nu$, $i_{\nu}\geq 2$. Furthermore, if $\nu$ is a violating index for $m$ different zero-indices $k_1<...<k_m$, it follows that $i_{\nu}\geq m+1$.\\
Now assume that we have $\mu$ different violating indices $\nu_1,...,\nu_{\mu}$ and that $\nu_j$ is a violating index for all $j\in \sN_j$, where we partition $\sN$ into disjoint subsets
\[\sN=\sN_1\cup \sN_2\cup...\cup \sN_{\mu}.\]
 Then $\sum_{j=1}^{\mu}i_{\nu_{j}}\geq \sum_{j=1}^\mu(\#\sN_j+1)=\#\sN+\mu$,
and

\[\sum_{k=a+1}^bi_k\geq \#\sN\cdot 0+\sum_{j=1}^{\mu}i_{\nu_j}+(b-a-(\#\sN+\mu))\cdot 1 =b-a>b-a-1,\]
a contradiction.\end{proof}

We can sharpen lemma \ref{spaltbedingung} to:
\begin{lemma}\label{spaltbedingung2}
Let $I$ be as above and assume that
\[\sum_{k=a}^b i_k=b-a-m.\]
Then there exist at least $m$ different splitting indices for $I$
\end{lemma}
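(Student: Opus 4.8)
\textbf{Proof plan for Lemma \ref{spaltbedingung2}.} The plan is to bootstrap the previous lemma: once we know that under the hypothesis $\sum_{k=a}^b i_k = b-a-m$ there is at least one splitting index, we want to ``remove'' that index and reduce the deficit by exactly one, so that iterating $m$ times produces $m$ distinct splitting indices. Concretely, I would first establish that the \emph{largest} (or smallest --- either extreme works, but the largest is cleanest) splitting index $\ell$ produced by Lemma \ref{spaltbedingung} has a useful structural feature: for $k \ge \ell$ the inequality $i_k \le k - \ell$ holds, so in particular the truncated sequence $I' := (i_a, \dots, i_{\ell-1})$ on the shorter range $[a, \ell-1]$ satisfies
\[
\sum_{k=a}^{\ell-1} i_k = (b-a-m) - \sum_{k=\ell}^{b} i_k.
\]
The key observation is that $\sum_{k=\ell}^b i_k \le \sum_{k=\ell}^b (k-\ell) = \binom{b-\ell+1}{2}$, but what we actually need is a cruder bound that still leaves the reduced sequence with deficit at least $m-1$ relative to its own length $\ell-1-a$; that is, we need $\sum_{k=a}^{\ell-1} i_k \le (\ell-1-a) - (m-1) = \ell - a - m$.

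The cleanest route, I expect, is to choose $\ell$ to be the \emph{smallest} splitting index and argue that the interval $[\ell, b]$ ``absorbs'' at most the amount $b - \ell$ of the sum: since $i_k \le k-\ell$ for all $k \ge \ell$ and $i_\ell = 0$, we get $\sum_{k=\ell}^b i_k \le \sum_{k=\ell+1}^{b}(k-\ell) = 1 + 2 + \dots + (b-\ell)$, which is generally much larger than $b-\ell$ and hence too weak. So instead I would run the argument on the \emph{head} of the sequence: drop the tail past some well-chosen point. The honest approach is an induction on $m$ where the inductive step replaces $b$ by $\ell - 1$ for a splitting index $\ell$ and checks that the deficit on $[a,\ell-1]$ is at least $m-1$: we have $\sum_{k=a}^{\ell-1} i_k = (b-a-m) - \sum_{k=\ell}^b i_k$ and, crucially, since $\ell$ is a splitting index, every $i_k$ with $k \ge \ell$ contributes to the complementary count, so one shows $\sum_{k=\ell}^{b} i_k \le (b - \ell) - (\text{number of splitting indices in } [\ell,b])$ by re-applying the counting argument of Lemma \ref{spaltbedingung} inside $[\ell, b]$. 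Distinct splitting indices of $I$ restricted to $[\ell,b]$ are distinct from $\ell$ itself and lie in $[\ell+1,b]$, so adjoining $\ell$ they stay distinct, completing the count.

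Rather than this somewhat delicate tail bookkeeping, the most robust plan is the following direct counting argument, mirroring the proof of Lemma \ref{spaltbedingung}. Let $\sN = \{k : i_k = 0\}$. Call $\ell \in \sN$ \emph{bad} if it fails to be a splitting index, i.e.\ there is a violating index $\nu > \ell$ with $i_\nu > \nu - \ell$. Suppose for contradiction that fewer than $m$ elements of $\sN \setminus \{a\}$ are splitting indices, so at least $\#(\sN \setminus\{a\}) - (m-1)$ elements of $\sN$ are bad (if $a \in \sN$ we may need to account for it, but $a$ is never a splitting index by definition, so it is harmless to include or exclude it consistently). Each bad index has a violating index $\nu$ with $i_\nu \ge 2$, and as in Lemma \ref{spaltbedingung}, if a single $\nu$ witnesses $r$ bad indices then $i_\nu \ge r + 1$. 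Partitioning the bad indices among their witnesses $\nu_1, \dots, \nu_\mu$ with block sizes $r_1, \dots, r_\mu$, one gets $\sum_j i_{\nu_j} \ge \sum_j (r_j + 1) = (\#\text{bad}) + \mu$. Then
\[
\sum_{k=a+1}^{b} i_k \;\ge\; \sum_{j=1}^\mu i_{\nu_j} \;+\; \bigl(b - a - \#\sN - \mu\bigr)\cdot 1 \;\ge\; \#\text{bad} + b - a - \#\sN \;\ge\; b - a - (m-1),
\]
using $\#\text{bad} \ge \#\sN - (m-1)$ (taking $\sN$ to mean $\sN \cap [a+1,b]$ throughout). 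But the hypothesis gives $\sum_{k=a}^b i_k = b - a - m$, hence $\sum_{k=a+1}^b i_k \le b-a-m < b-a-(m-1)$, a contradiction. Therefore at least $m$ distinct splitting indices exist.

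\textbf{Main obstacle.} The delicate point is the bookkeeping of the non-splitting (``bad'') zero-indices versus the violating indices: one must verify that a violating index $\nu$ is genuinely disjoint from the set of zero-indices it witnesses (true since $i_\nu \ge 2 > 0$) and that the multiplicities add up correctly when several bad indices share a witness. Getting the inequality $\#\text{bad} \ge \#\sN - (m-1)$ exactly right --- and making sure the index $a$ is handled consistently on both sides so that no off-by-one creeps in --- is the only real subtlety; everything else is a verbatim rerun of the proof of Lemma \ref{spaltbedingung}.
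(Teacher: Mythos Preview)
Your direct counting argument in the second half of the proposal is correct, and it takes a genuinely different route from the paper's proof. The paper argues by induction on $m$: it invokes Lemma \ref{spaltbedingung} to produce one splitting index $\nu$, splits the sequence at $\nu$ into $(i_a,\dots,i_{\nu-1})$ and $(i_\nu,\dots,i_b)$, and then runs a three-way case analysis on $N_\nu:=\sum_{k=a}^{\nu-1}i_k$ to push the induction hypothesis into one or both halves and collect enough splitting indices. Your approach instead reruns the violating-index bookkeeping of Lemma \ref{spaltbedingung} globally: assuming at most $m-1$ zero-indices in $[a+1,b]$ are splitting, the remaining $\#\text{bad}\ge \#\sN-(m-1)$ bad zero-indices each acquire a witness $\nu$ with $i_\nu\ge r+1$ when $\nu$ serves $r$ of them, and summing gives $\sum_{k=a+1}^b i_k\ge b-a-(m-1)$, contradicting the hypothesis. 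This is cleaner than the paper's proof: it avoids the case split entirely and makes transparent that Lemma \ref{spaltbedingung2} is really the same counting inequality as Lemma \ref{spaltbedingung} with the ``deficit'' parameter $m$ inserted. The only point worth tightening in your write-up is exactly the one you flag: fix once and for all that $\sN$ denotes the zero-indices in $[a+1,b]$, so that every element of $\sN$ is either a splitting index or bad, and the inequality $\#\text{bad}\ge \#\sN-(m-1)$ is immediate. The verification that a witness $\nu$ for $r$ bad indices $k_1<\dots<k_r$ satisfies $i_\nu\ge r+1$ follows from $i_\nu>\nu-k_1$ together with $\nu\ge k_r+1\ge k_1+r$. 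The exploratory induction sketch in your first half does not quite converge (the tail bound you try is too weak, as you notice), but since you abandon it for the direct argument this is harmless.
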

\begin{proof}
We proceed by induction on $m$. The case $m=1$ is just Lemma \ref{spaltbedingung}. Assume that for all $n \leq m$, we have proven the statement.\\
Now assume $\sum_{k=a}^bi_k=b-a-(m+1)$. Since $m+1>1$, there exists a splitting  index  $\nu$. We can split up the set of indices $\{i_a,...,i_b\}=\{i_a,...,i_{\nu-1}\}\cup\{i_{\nu},...,i_b\}$ in  position $\nu$ such that $\sum_{k=a}^{\nu-1} i_k=N_{\nu}$ and $\sum_{k=\nu}^bi_k=b-a-m-1-N_{\nu}$. Depending on $N_{\nu}$, we have to distinguish between the following cases:
\begin{enumerate}
\item $N_{\nu}>(\nu-1)-a-1$. It follows that $b-a-m-1-N_{\nu}<b-a-m-((\nu-1)-a-1)=b-m-(\nu-1)$, and thus $\sum_{k=\nu}^bi_k\leq b-\nu-m$. By induction, there exists at least $m$ splitting indices in $(i_{\nu},...,i_b)$, and thus for the whole  $(i_a,...,i_b)$, there exist at least $m+1$ such indices.
\item The case $N_{\nu}\leq (\nu-1)-a-1$ splits up in two cases:
\begin{enumerate}
\item $N_{\nu}\leq (\nu-1)-a-m$. By induction, $(i_a,...,i_{\nu-1})$ has  at least $m$ splitting indices, and the whole  $(i_a,...,i_b)$ has  at least $m+1$ such indices.
\item $N_{\nu}=(\nu-1)-a-n$, where $1\leq n\leq m$. Since $\sum_{k=a}^{\nu-1}i_k=(\nu-1)-a-n$, by induction for  $(i_a,...,i_{\nu-1})$ exist at least $n$ splitting indices. Since $\sum_{k=\nu}^bi_k=b-\nu-(m-n)$,  for  $(i_{\nu},...,i_b)$, there exist at least $m-n$ splitting indices. Thus, for the whole $(i_a,...,i_b)$ exist at least $n+(m-n)+1=m+1$ splitting indices.\end{enumerate} 
\end{enumerate}
\end{proof}

\begin{lemma}\label{gemeinsamsplit}
\begin{enumerate}
\item Let $I=(i_0,...,i_s)$ and $J=(j_1,...,j_{s-1})$ with  \[\sum_{k=0}^si_k+\sum_{k=1}^{s-1}j_k \leq s-1.\] Let $S_{I}$ be the set of splitting indices of $I$ and $S_J$ be the set of splitting indices of $J$. Then,
\[S_I\cap(S_J\cup \{1,s\})\not=\emptyset.\]

\item Let $I=\{i_0,...,i_{s-1}\}$ and $J=(j_1,...,j_{s})$ with  \[\sum_{k=0}^{s-1}i_k+\sum_{k=1}^{s}j_k \leq s-1.\] Let $S_{I}$ be the set of splitting indices of $I$ and $S_J$ be the set of splitting indices of $J$. Then,
\[(S_I\cup \{s\})\cap(S_J\cup \{1\})\not=\emptyset.\]

\end{enumerate}
\end{lemma}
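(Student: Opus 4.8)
\textbf{Proof plan for Lemma \ref{gemeinsamsplit}.}

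The plan is to prove both parts by a pigeonhole/counting argument on the distribution of the ``mass'' $\sum i_k$ and $\sum j_k$ between the two sequences, combined with Lemma \ref{spaltbedingung2}, which converts a deficit $\sum_{k=a}^b i_k = b-a-m$ into $m$ splitting indices. First I would set up notation: for part (1), $I=(i_0,\dots,i_s)$ has length $s+1$ (indices $0$ through $s$), so $b-a = s$, and $J=(j_1,\dots,j_{s-1})$ has length $s-1$ (indices $1$ through $s-1$), so $b-a=s-2$. Write $\sum_{k=0}^s i_k = s - m_I$ and $\sum_{k=1}^{s-1} j_k = s-2 - m_J$; the hypothesis $\sum i_k + \sum j_k \le s-1$ becomes $m_I + m_J \ge s-1 - (s-1) + \cdots$, i.e. after arithmetic $m_I + m_J \ge s-1$. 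Actually the clean statement is: $(s-m_I) + (s-2-m_J) \le s-1$ gives $m_I + m_J \ge s-1$, so $m_I + m_J \ge s-1$.

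The key step is then a location argument. By Lemma \ref{spaltbedingung2}, $I$ has at least $m_I$ splitting indices, all lying in $\{1,2,\dots,s\}$, and $J$ has at least $m_J$ splitting indices, all lying in $\{2,\dots,s-1\}$. Here $1$ is always a splitting index for $I$ when $m_I \ge 1$ (since $\ell=1 > a=0$ and $i_k \le k-1$ must then be arranged — more precisely one uses that $\ell=a+1$ with the convention that the hypothesis of \ref{spaltbedingung2} places splitting indices starting from $a+1$), so I will include $1$ in the analysis via the artificially-adjoined set $\{1,s\}$. The goal is to show $S_I$ meets $S_J \cup \{1,s\}$. Suppose not: then $S_I \cap S_J = \emptyset$, $1 \notin S_I$, $s \notin S_I$, so $S_I \subseteq \{2,\dots,s-1\}$ and is disjoint from $S_J \subseteq \{2,\dots,s-1\}$. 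These are disjoint subsets of an $(s-2)$-element set, so $|S_I| + |S_J| \le s-2$, giving $m_I + m_J \le s-2$, contradicting $m_I + m_J \ge s-1$. Part (2) is the mirror image: $I=(i_0,\dots,i_{s-1})$ has $b-a = s-1$ and splitting indices in $\{1,\dots,s-1\}$; $J=(j_1,\dots,j_s)$ has $b-a = s-1$ and splitting indices in $\{2,\dots,s\}$; the hypothesis gives $m_I + m_J \ge s-1$; if $S_I \cup \{s\}$ is disjoint from $S_J \cup \{1\}$ then $S_I \subseteq \{1,\dots,s-1\}$ avoids $1$? — no, rather $S_I$ avoids $S_J$ and avoids $1$, while $S_J$ avoids $s$, forcing both into $\{2,\dots,s-1\}$ disjointly, hence $m_I + m_J \le s-2$, a contradiction.

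I expect the main obstacle to be bookkeeping the index ranges and the off-by-one conventions precisely: in particular confirming that a splitting index for a sequence indexed from $a$ always lies in $\{a+1,\dots,b\}$ (this is immediate from the definition, since $\ell > a$ and the condition $i_k \le k - \ell$ forces $\ell \le b$ unless the constraint is vacuous), and confirming that $1$ genuinely belongs to $S_I$ in part (1) / that $s \in S_J$-type endpoints behave as the adjoined singletons suggest — i.e., checking that when we adjoin $\{1,s\}$ we are not double-counting an index that Lemma \ref{spaltbedingung2} already guarantees. Once the ranges $S_I \subseteq \{1,\dots,s\}$, $S_J \subseteq \{2,\dots,s-1\}$ (part 1) and the symmetric statement (part 2) are pinned down, the disjointness-in-a-small-set counting is a one-line pigeonhole. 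A minor secondary point is the edge behaviour for small $s$ (e.g. $s=1$ or $s=2$), where some sequences are very short or some index sets are empty; I would dispose of these separately at the start, noting that for $s=1$ the hypothesis forces all entries zero and $\{1,s\}=\{1\}$ (resp. $\{1\}$ and $\{s\}=\{1\}$) already does the job.
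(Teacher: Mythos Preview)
Your approach is essentially the same as the paper's: both reduce to a pigeonhole count using Lemma \ref{spaltbedingung2} to convert the hypothesis $\sum i_k + \sum j_k \le s-1$ into the lower bound $\#S_I + \#S_J \ge s-1$, and then observe that $S_I$ and $S_J \cup \{1,s\}$ (resp.\ $S_I \cup \{s\}$ and $S_J \cup \{1\}$) are subsets of $\{1,\dots,s\}$ whose sizes sum to at least $s+1$. The paper states this as a direct inclusion-exclusion, while you phrase it as a proof by contradiction; the content is identical. Your parenthetical worries about whether $1$ is automatically a splitting index and about edge cases for small $s$ are unnecessary and can simply be deleted: the argument only uses the containments $S_I \subseteq \{1,\dots,s\}$, $S_J \subseteq \{2,\dots,s-1\}$ (part 1) and the symmetric ranges (part 2), together with $\#S_I \ge m_I$, $\#S_J \ge m_J$, all of which follow immediately from the definition and Lemma \ref{spaltbedingung2}.
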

\begin{proof}

\begin{enumerate}
\item Note that since $S_I\cup S_J\cup\{1,s\}\subset\{1,2,...,s\}$, it follows that $\#(S_I\cup S_J\cup\{1,s\})\leq s$. Note that $\sum_{k=0}^si_k\geq s-\#S_I$ by Lemma \ref{spaltbedingung2}. This implies that $\sum_{k=1}^{s-1}j_k\leq s-2-(s-(\#S_I+1))$, and hence that  $\#S_J\geq s-(\#S_I+1)$  by Lemma \ref{spaltbedingung2}.  But  $\#S_I+\#S_J+2=\#S_I+s-(\#S_I+1)+2=s+1>s$, which implies $\#(S_I\cap(S_J\cup\{1,s\}))\geq 1$, and thus the statement follows.
\item Note that since $(S_I\cup\{s\})\cup (S_J\cup\{1\})\subset\{1,...,s\}$, it follows that $\#(S_I\cup\{s\})\cup (S_J\cup\{1\})\leq s$. Now  $\sum_{k=0}^{s-1}i_k\geq s-1-\#S_I$, which implies $\sum_{k=1}^sj_k\leq s-1-(s-\#S_I-1)$, and  $\#S_J\geq s-\#S_I-1$. But $\#S_I+1+\#S_J+1\geq \#S_I+1+s-\#S_I=s+1>s$, which implies that $\#((S_I\cup\{s\})\cap(S_J\cup\{1\}))\geq 1$, and the statement follows.
\end{enumerate}
\end{proof}

\section{Proof for  higher $s$}
We will use the combinatorical lemmas on splitting indices from the last section to prove the congruence (\ref{cong1neu}) modulo $p^s$.\\
 For a sequence $I=(i_a,...,i_b)$, we write
\[p^I:=p^{\sum_{k=a}^bi_k}.\]
For a sequence $J=(j_a,...,j_b)$, we define $J+1:=(j_a+1,...,j_b+1)$. \\
Note that if $k-j_k>0$ for $a \leq k\leq b$, then we have
\begin{equation}G(a,b;J+1)=G(a,b;J),\label{gpraus}\end{equation} 
since the constant term of a Laurent-polynomial $f(X)$ is the same as the constant term of the Laurent-polynomial $f(X^p)$.\\
Let  \[p^{I+J}G(0,s;I)G(1,s-1;J+1) \] be a summand on the left-hand side of (\ref{congneu}) defined by the tuple 
$(I,J)$ with $\sum_{k=0}^si_k+\sum_{k=1}^{s-1}j_k \leq s-1$, and let $1\leq \nu \leq s$  be such that $G(0,s;I)$ splits in position $\nu$ and either $G(1,s-1;J+1)$  splits in position $\nu$ or $\nu \in \{1,s\}$. Such a $\nu$ exists by Lemma (\ref{gemeinsamsplit}). Define $I'=(i_0',...,i_{s-1}')$ and $J'=(j_1',...,j_s')$ by 
\begin{eqnarray*}
i_k' & = & i_k \; \textnormal{for} \; k \leq \nu-1\\
i_k'& = & j_k \; \textnormal{for} \; k \geq \nu\\
j_k' & = & j_k \; \textnormal{for} \; k \leq \nu-1\\
j_k' & = & i_k \; \textnormal{for} \; k \geq \nu. 
\end{eqnarray*}
To show that $p^{I'+J'}G(0,s-1;I')G(1,s;J'+1)$ is in fact a summand on the right-hand side of (\ref{congneu}), we have to explain why $i_k'\leq k$ and $j_k'\leq k-1$. Note that $j_k\leq k-1$ for $1 \leq k \leq s-1$ and $i_k\leq k$ for $0 \leq k \leq s$. Furthermore, we have $i_k\leq k-1$ for $k\geq \nu$ since $i_{\nu}=0$ and $G(0,s;I)$ splits in position $\nu$, which means that $k-i_k\geq \nu\geq 1$ for $k\geq \nu$.\\
By definition of $j_k'$ and $i_k'$, it now follows that  $j_k'\leq k-1$ for $1\leq k \leq s$, and $i_k'\leq k$ for $0 \leq k \leq s-1$.\\
 Now that we know that $p^{I'+J'}G(0,s-1;I')G(1,s;J'+1)$ is in fact a summand on the right-hand side of congruence (\ref{congneu}), we prove the following Proposition.
Remark that obviously, we have $p^{I+J}=p^{I'+J'}$. 
\begin{prop}\label{fertig1} Let  $I,J,I'$ and $J'$ be defined as above. Then,
\[G(0,s,I)G(1,s-1;J+1)=G(0,s-1;I')G(1,s;J'+1).\] Thus, we can identify each summand on the left-hand side of (\ref{congneu}) with a summand on the right-hand side.
\end{prop}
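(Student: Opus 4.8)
The plan is to prove the equality $G(0,s;I)G(1,s-1;J+1)=G(0,s-1;I')G(1,s;J'+1)$ by applying the splitting Proposition \ref{spalten} to both sides at the common position $\nu$, and then observing that the four resulting ``blocks'' match up after the swap defining $I'$ and $J'$. First I would record that, by the choice of $\nu$ and Lemma \ref{gemeinsamsplit}, the index $\nu$ is a splitting index for $I$, so that $G(0,s;I)=G(0,\nu-1;I)\,G(\nu,s;I)$; and that $\nu$ is either a splitting index for $J$ or lies in $\{1,s\}$. In the case $\nu\in\{2,\dots,s-1\}$ where $\nu$ is a splitting index for $J$, one likewise gets $G(1,s-1;J+1)=G(1,\nu-1;J+1)\,G(\nu,s-1;J+1)$; the boundary cases $\nu=1$ and $\nu=s$ have to be handled separately because then one of the two factors on one side is an empty product, but in those cases the ``splitting'' of $G(1,s-1;J+1)$ is trivial or vacuous and the identity degenerates pleasantly. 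So modulo these boundary bookkeeping cases, both products factor as a product of four constant-term blocks indexed by $k\in\{0,\dots,\nu-1\}$ and $k\in\{\nu,\dots,s\text{ or }s-1\}$.

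Next I would match the blocks. By the definition of $I'$ and $J'$: for $k\le\nu-1$ we have $i_k'=i_k$ and $j_k'=j_k$, so the ``low'' block of $G(0,s-1;I')$ equals the low block of $G(0,s;I)$, and the low block of $G(1,s;J'+1)$ equals the low block of $G(1,s-1;J+1)$ --- here I must check the index ranges agree, namely that both low blocks run over exactly $k\in\{0,\dots,\nu-1\}$ resp.\ $k\in\{1,\dots,\nu-1\}$, which they do since the swap happens at $\nu$. For $k\ge\nu$ we have $i_k'=j_k$ and $j_k'=i_k$, so the ``high'' block of $G(0,s-1;I')$ is built from the $j_k$ (for $k\in\{\nu,\dots,s-1\}$) and the high block of $G(1,s;J'+1)$ is built from the $i_k$ (for $k\in\{\nu,\dots,s\}$); these are, up to the cosmetic shift handled by (\ref{gpraus}), precisely the high block of $G(1,s-1;J+1)$ and the high block of $G(0,s;I)$ respectively. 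The one subtlety requiring (\ref{gpraus}) is that the expressions $G(\cdot,\cdot;J+1)$ versus $G(\cdot,\cdot;J)$ differ by an $X\mapsto X^p$ rescaling which does not change the constant term, valid because $k-j_k>0$ on the relevant range; and similarly that moving a block from the ``$I$-list'' (exponents $p^{k-i_k}$) to the ``$J'+1$-list'' (exponents $p^{k-1-(i_k)}$... wait, $p^{k-1-j_k'}=p^{k-1-i_k}$) is again just a uniform rescaling of the variables inside a constant term, hence harmless --- this is exactly where the identity $G(a,b;J+1)=G(a,b;J)$ is used, applied after the relabeling.

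Assembling, both sides equal the product of the same four constant-term factors (low-$I$, low-$J$, high-$J$, high-$I$), just grouped differently: the left groups (low-$I$,high-$I$) and (low-$J$,high-$J$), the right groups (low-$I$,high-$J$) and (low-$J$,high-$I$). The equality of the two groupings is precisely the statement that both $G(0,s;I)$ and $G(0,s-1;I')$ split at $\nu$ (Proposition \ref{spalten}), and likewise for the $J$'s. I expect the main obstacle to be \emph{purely organizational}: carefully verifying that Proposition \ref{spalten}'s hypothesis ``$k-i_k\ge\nu$ for all $k\ge\nu$'' is met on each of the four list-fragments after the swap --- for the unswapped low parts this is immediate, but for the swapped high parts one must use that $\nu$ was a splitting index for the \emph{other} sequence, i.e.\ $j_k\le k-\nu$ for $k\ge\nu$ gives $k-j_k'=k-j_k\ge\nu$, so $G(1,s;J'+1)$ genuinely splits at $\nu$; and symmetrically $i_k\le k-\nu$ for $k\ge\nu$ gives that the high-$I$ fragment sitting inside $G(0,s-1;I')$... no, inside $G(1,s;J'+1)$ via $j_k'=i_k$, has $k-1-j_k'=k-1-i_k\ge\nu-1\ge 0$, which is what (\ref{gpraus}) needs. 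The boundary cases $\nu=1$ (trivial split of $I$, nothing to do on one side) and $\nu=s$ (the split of $G(1,s-1;J+1)$ at $s$ is over an empty high part, and on the right $G(1,s;J'+1)$ acquires a single extra factor $g_{n_s,i_s}(X^{p^{s-1-i_s}})$ which must be reconciled with the factor $g_{n_s,i_s}(X^{p^{s-i_s}})$ in $G(0,s;I)$ via (\ref{gpraus}), legitimate since $s-i_s\ge 1$ because $i_s\le s-1$ as noted before the proposition) should be written out explicitly, as they are where an off-by-one slip is most likely.
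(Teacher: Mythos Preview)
Your proposal is correct and follows essentially the same route as the paper's proof: split both factors at the common index $\nu$ supplied by Lemma~\ref{gemeinsamsplit}, use (\ref{gpraus}) to absorb the exponent shift when a high block migrates between the $I$-list and the $(J'+1)$-list, swap the high blocks according to the definition of $I',J'$, and recombine via Proposition~\ref{spalten} after checking that $\nu$ is again a splitting index for $I'$ and $J'$. Your treatment is in fact more careful than the paper's on two points --- the explicit verification that $k-i_k'\ge\nu$ and $k-j_k'\ge\nu$ for $k\ge\nu$ (needed for the final recombination), and the separate handling of the boundary cases $\nu=1$ and $\nu=s$, which the paper glosses over --- but these are elaborations of the same argument, not a different approach.
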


\begin{proof}
By a direct computation:
\begin{eqnarray*}
&&G(0,s;I)G(1,s-1;J+1)\\ &=& G(0,\nu-1;I)G(\nu,s;I)G(1,\nu-1;J+1)G(\nu,s-1;J+1) \; \textnormal{by lemma} \; \ref{gemeinsamsplit}\\
&=& G(0,\nu-1;I)G(\nu,s;I+1)G(1,\nu-1;J+1)G(\nu,s-1;J) \,\textnormal{by}\;\; (\ref{gpraus})\\
&=& G(0,\nu-1;I)G(\nu,s-1;J)G(1,\nu-1;J+1)G(\nu,s;I+1)\; \textnormal{(commutation)}\\
&=&G(0,\nu-1;I')G(\nu,s-1;I')G(1,\nu-1;J'+1)G(\nu,s;J'+1)\, \textnormal{by definition of $I'$, $J'$}\\
&=& G(0,s-1;I')G(1,s;J'+1) \; \textnormal{by lemma} \; \ref{gemeinsamsplit},
\end{eqnarray*}
the statement follows.
Note that the last equality follows since by definition of $I'$ and $J'$, $i_{\nu}'=j_{\nu}'=0$, $k-i_k'\geq \nu$ and $k-j_k'\geq \nu$ for $k>\nu$. Thus, $G(0,s-1;I')$ and $G(1,s;J'+1)$ both split at $\nu$.\end{proof}

Since by Proposition \ref{fertig1}, we can identify every summand on the left-hand side of equation (\ref{congneu}) satisfying $I+J\leq s-1$ with a summand on the right-hand side, both sides are equal modulo $p^s$ and the proof of Theorem \ref{hauptsatz} is complete.\\

\noindent {\em Remark:} The above arguments to  prove the congruence $D3$ can be slightly simplified, as was shown to us by A. Mellit.

\section{An Example}
Let $f$ be the Laurent-polynomial 
\begin{eqnarray*}f:&=&1/X_4+X_2+1/X_1X_4+1/X_1X_3X_4+1/X_1X_2X_3X_4+1/X_3+X_1/X_3\\&+&X_2/X_3X_4+X_1/X_3X_4+X_1X_2/X_3X_4+X_2/X_4
+1/X_2X_4+1/X_1X_2X_4+1/X_1X_2\\ &+&1/X_1+1/X_2X_3X_4+X_4+1/X_2 +X_1+X_1/X_4+1/X_3X_4+X_3+1/X_2X_3.\end{eqnarray*}

It is No. 24 in the list of Batyrev and Kreuzer \cite{BK}, so $\Delta(f)$
is a reflexive polytope and our theorem \ref{hauptsatz} applies:
the coefficients $a(n):=[f^n]_0$ 
\[ a(0)=1, a(1)=0, a(2)=18, a(3)=168, a(4)=2430, a(5)=37200, a(6)=605340\]
satisfy the congruence $D3$ modulo $p^s$ for arbitrary $s$.\\
The power series $\Phi(t)=\sum_{n=0}^{\infty}a(n)t^n$ is solution to a fourth order linear differential equation $ PF=0$, where the differential operator $ P$ is of Calabi-Yau type  
\begin{eqnarray*}
P&:=&88501054\theta^4+t(912382\theta(-291-1300\theta-2018\theta^2+1727\theta^3)+...\\ &+&t^{11}(3461674786667136(\theta+1)(\theta+2)(\theta+3)(\theta+4)),
\end{eqnarray*}
(where $\theta:=t\partial/\partial t$) that was determined in \cite{pm}.

\section{Behaviour under Covering} 
Let $f$ be a Laurent-polynomial corresponding to a reflexive polyhedron, let $\mathcal{A}$ be the exponent matrix corresponding to $f$,
and consider the vectors with integral entries in the kernel of $\mathcal{A}$. If there exists a positive integer $k$ such that
\[\ell:=\left(\begin{array}{c}\ell_1 \\ \vdots \\ \ell_m\end{array}\right) \in \ker(\mathcal{A}) \Rightarrow k| (\ell_1+...+\ell_m),\]
then it follows that 
\[a(n):=[f^n]_0\not=0\Rightarrow k|n,\]
since for $l \in \mathbb{N}$, 
\[[f^l]_0=\sum_{(\ell_1,...,\ell_m) \in A_{f,l}}{l \choose \ell_1,\ell_2,...,\ell_m},\]
where
\begin{eqnarray*}A_{f,l}&:=&\ker(\mathcal{A})\cap\{(\ell_1,...,\ell_m) \in \mathbb{N}_0^m, \ell_1+...+\ell_m=l.\}.
\end{eqnarray*}
We are interested in the congruences
\begin{eqnarray}\label{kkongr}\nonumber &&a(k(n_0+...+n_sp^s))a(k(n_1+...+n_{s-1}p^{s-2}))\equiv\\ && a(k(n_0+...+n_{s-1}p^{s-1}))a(k(n_1+...+n_sp^{s-1})) \mod p^s,
\end{eqnarray}
which we will prove in general for $s=1$, and  which we will prove for one example  by proving that the following condition is satisfied:
\begin{condition}\label{pggt} For a tuple $(\ell_1,...,\ell_m)$ with
\[\ell_1+...+\ell_m=k\mu\leq k(p-1),\]
it follows that 
\[p|\gcd(\sum_{j=1}^ma_{i,1}\ell_1,...,\sum_{j=1}^ma_{j,n}\ell_j)\Rightarrow \sum_{j=1}^ma_{i,1}\ell_j=...=\sum_{j=1}^ma_{j,n}\ell_j=0.\]
\end{condition}
Note that the proof is simliar for many other examples which we will not treat in here.\\
First of all, before we come to the example, we give a general proof of (\ref{kkongr}) for $s=1$.
\begin{prop} Let $a(n), n \in \mathbb{N}$ be an integral sequence satisfying
\[a(n_0+n_1p)\equiv a(n_0)a(n_1) \mod p\]
for $0 \leq n_0 \leq p-1$ and $a(n)\not=0$ iff $k|n$. Then

\[ a(k(n_0+n_1p))\equiv a(kn_0)a(kn_1) \mod p.\] 
\end{prop}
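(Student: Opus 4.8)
The idea is to reduce the statement about $a(kn_0+kn_1p)$ to the mod-$p$ congruence that we are allowed to assume, exploiting the support condition $a(n)\neq 0 \iff k\mid n$. First I would observe that $kn_0$ and $kn_1$ play the roles of generic indices, but that the hypothesis congruence is phrased for a base-$p$ digit $n_0$ with $0\leq n_0\leq p-1$, so one cannot apply it directly to $kn_0$. Instead, the plan is to write $kn_0$ in base $p$ as $kn_0 = m_0 + m_1 p + \ldots + m_r p^r$ and use the iterated mod-$p$ consequence derived in Section 1, namely $a(n)\equiv a(m_0)a(m_1)\cdots a(m_r)\bmod p$ for any $n$ with base-$p$ digits $m_i$.

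The key step is a carry analysis. Write $kn_0 = \sum_i m_i p^i$ with $0\leq m_i\leq p-1$. Then $k(n_0+n_1 p) = kn_0 + kn_1 p$, and since $kn_0 < kp \leq p\cdot p$ is not the relevant bound — rather the point is that we must understand how the base-$p$ digits of $k(n_0+n_1p)$ relate to those of $kn_0$ and $kn_1$. I would argue as follows: if $a(kn_0)=0$, then by the iterated congruence some digit $m_i$ has $a(m_i)\equiv 0$, but more usefully $a(kn_0)=0$ forces (via the support condition, since $a$ is nonzero only on multiples of $k$, but a single digit $m_i<p$ need not be a multiple of $k$) — so the clean route is: the support condition says $a(kn_0)a(kn_1)$ is automatically handled, and one shows both sides vanish simultaneously unless a carry-free situation occurs. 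Concretely, I would split into the case where the base-$p$ addition $kn_0 + (kn_1)\cdot p$ produces a carry out of position dividing into the $kn_0$ block and the case where it does not. If there is no carry (which happens exactly when the leading digit situation is benign), then the base-$p$ digits of $k(n_0+n_1p)$ are the concatenation of those of $kn_0$ and those of $kn_1$, and the iterated mod-$p$ congruence gives $a(k(n_0+n_1p))\equiv a(kn_0)a(kn_1)\bmod p$ immediately. If there is a carry, then I would show $a(kn_0)\equiv 0 \bmod p$: a carry out of the top of the $kn_0$-block means $kn_0 \geq p^{r}$ but the relevant digit $m_r$ combined with the incoming contribution exceeds $p-1$; tracing through, $a(kn_0)=0$ because the value $kn_0$, together with whatever we need, is not consistent with the multiple-of-$k$ support — actually the cleanest assertion is that when a carry occurs the product $a(kn_0)a(kn_1)$ and $a(k(n_0+n_1p))$ are congruent to $0$ for a support reason.

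The main obstacle I anticipate is precisely handling the carries cleanly: the statement $a(n)\equiv a(n_0)a(n_1)\cdots \bmod p$ requires writing things in base $p$, and $k(n_0+n_1p)$ is \emph{not} in general the base-$p$ concatenation of $kn_0$ and $kn_1$ when $kn_0\geq p$. The resolution should use the support hypothesis crucially: since $a(n)\neq 0$ only when $k\mid n$, and one can check that a carry from the lower block into the upper block would require an index that is not divisible by $k$ appearing as a factor, forcing the relevant $a$-value to vanish mod $p$; thus both sides are $\equiv 0$ and the congruence holds trivially. So the proof structure is: (i) recall the iterated mod-$p$ formula from Section 1; (ii) if $kn_0 < p$, apply the hypothesis directly with $n_0 \rightsquigarrow kn_0$ (legal since $kn_0\leq k(p-1)$, but we need $kn_0 \leq p-1$ — so actually one iterates the base-$p$ digits of $kn_0$); (iii) carry analysis to show the digits of $k(n_0+n_1p)$ split as digits of $kn_0$ followed by digits of $kn_1$, or else both sides vanish; (iv) conclude via the multiplicativity of $a(\cdot)$ mod $p$ on base-$p$ digits. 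I would write step (iii) as the technical heart, using that $n_0\leq p-1$ and $n_1\leq p-1$ keep the carry pattern under control.
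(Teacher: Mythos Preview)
Your overall architecture matches the paper's: split into the case $kn_0<p$ (apply the hypothesis directly) and the case $kn_0\ge p$ (show both sides vanish mod $p$). But your execution of the second case is muddled, and the ``carry analysis'' you propose is not the right mechanism.

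The paper's argument for $kn_0\ge p$ is a one-line observation you are missing. Write $kn_0=n_0'+n_0''p$ with $0\le n_0'\le p-1$; since $n_0\le p-1$ we have $1\le n_0''\le k-1$. Then $n_0'=kn_0-n_0''p$, and (using $\gcd(k,p)=1$) the condition $k\mid n_0'$ would force $k\mid n_0''$, impossible. Hence $k\nmid n_0'$ and the support hypothesis gives $a(n_0')=0$. Now the \emph{lowest} base-$p$ digit of $k(n_0+n_1p)=n_0'+(n_0''+kn_1)p$ is that very same $n_0'$, so a single application of the assumed congruence gives
\[
a(k(n_0+n_1p))\equiv a(n_0')\,a(n_0''+kn_1)=0,\qquad a(kn_0)\equiv a(n_0')\,a(n_0'')=0\pmod p,
\]
and both sides of the desired congruence vanish.

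Your plan instead focuses on carries ``out of the top of the $kn_0$-block'' and on whether the base-$p$ digits of $k(n_0+n_1p)$ concatenate those of $kn_0$ and $kn_1$. They essentially never do once $kn_0\ge p$: the block $kn_0$ already spills into position $1$, overlapping with $kn_1\cdot p$, regardless of any further carry. So your dichotomy ``no carry $\Rightarrow$ concatenation, carry $\Rightarrow$ both sides vanish'' is not the correct one; the concatenation picture only occurs in the trivial case $kn_0<p$. The actual reason both sides vanish is not a carry phenomenon at all but the divisibility obstruction on the \emph{bottom} digit $n_0'$, which is shared by $kn_0$ and $k(n_0+n_1p)$ automatically. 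Once you see this, the iterated digit formula from Section~1 is unnecessary: one application of the hypothesis suffices.
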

\begin{proof}
If $kn_0<p$, then the proposition follows directly. Hence assume that $kn_0=n_0'+n_0''p>p-1$. Then
\begin{eqnarray*}a(k(n_0+n_1p))&=&a(n_0'+(kn_1+n_0'')p)\\ &\equiv& a(n_0')a(kn_1+n_0'') \mod p.\end{eqnarray*}
Since $k\not|n_0'$ and $a(n_0')=0$ by assumption,  it follows that
\[a(k(n_0+n_1p))\equiv 0 \mod p.\]
On the other hand, $a(kn_0)=a(n_0'+n_0''p)\equiv a(n_0')a(n_0'') \mod p$ where $a(n_0')=0$, and thus $a(kn_0)\equiv 0 \mod p$ and
\[a(kn_0)a(kn_1)\equiv 0 \mod p\] and the proposition follows.
\end{proof}

\subsection{An Example}

Let $f$ be the Laurent-polynomial No. 62 in the list of Batyrev and Kreuzer \cite{BK}, which is given by
\[f:=X_1+X_2+X_3+X_4+\frac{1}{X_1X_2}+\frac{1}{X_1X_3}+\frac{1}{X_1X_4}+\frac{1}{X_1^2X_2X_3X_4}.\]
Then, the coefficients $a(n)$ are given by $a(n)=0$ if $n \not=0 \mod 3$ and 
\[a(3n)=\frac{(3n)!}{n!^3}\sum_{k=0}^n{n \choose k}^2{n+k \choose k}.\]
The Newton polyhedron $\Delta(f)$ is reflexive (see \cite{BK}), and hence by Theorem \ref{hauptsatz}, the coefficients $a(n)$ satisfy the congruence $D3$ modulo $p^s$ for arbitrary $s$.\\
The power series $\Phi(t)=\sum_{n=0}^{\infty}a(3n)t^n$ is solution  to a fourth order linear differential equation $ PF=0$, where the differential operator $ P$ is of Calabi-Yau type  and is given by 
\begin{eqnarray*}P&:=&\theta^4-3t(3\theta+2)(3\theta+1)(11\theta^2+11\theta+3)\\ &-&9t^2(3\theta+5)(3\theta+2)(3\theta+4)(3\theta+1).\end{eqnarray*}
In this example,
 the exponent matrix is
\[\mathcal{A}:=\left(\begin{array}{cccccccc}1&0&0&0&-1&-1&-1&-2\\ 0 &1&0&0&-1&0&0&-1\\0&0&1&0&0&-1&0&-1\\0&0&0&1&0&0&-1&-1\end{array}\right).\]
A basis of $\ker(\mathcal{A})$ is given by
\[\{\left(\begin{array}{c}1\\1\\0\\0\\1\\0\\0\\0\end{array}\right),\left(\begin{array}{c}1\\0\\1\\0\\0\\1\\0\\0\end{array}\right),\left(\begin{array}{c}1\\0\\0\\1\\0\\0\\1\\0\end{array}\right),\left(\begin{array}{c}2\\1\\1\\1\\0\\0\\0\\1\end{array}\right)\},\]
and thus it follows that $[f^n]_0\not=0\Rightarrow 3|n$ and $k=3$.  We prove that Condition \ref{pggt} is satisfied in this example. 
Assume that $p\not=3$ and that \[p|\gcd(\sum_{j=1}^8a_{1,j}\ell_j,...,\sum_{j=1}^8a_{4,j}\ell_j)\; \textnormal{for} \; \ell_1+...+\ell_8=3\mu\leq 3(p-1).\]
This means that there exist $x_1,x_2,x_3,x_4 \in \mathbb{Z}$ such that
\begin{eqnarray*}
\ell_1&=&\ell_5+\ell_6+\ell_7+2\ell_8+x_1p\\
\ell_2&=&\ell_5+\ell_8+x_2p\\
\ell_3&=&\ell_6+\ell_8+x_3p\\
\ell_4&=&\ell_7+\ell_8+x_4p,
\end{eqnarray*}
which implies
\begin{equation*}3(\ell_5+\ell_6+\ell_7+2\ell_8)+(x_1+x_2+x_3+x_4)p=3\mu \leq 3(p-1).\end{equation*}
Thus, it follows that $(x_1+...+x_4)=3z$ for some $z \in \mathbb{Z}$ and that
\begin{equation*}\ell_5+\ell_6+\ell_7+2\ell_8+zp=\mu \leq p-1.\end{equation*} 
Since $\ell_5,...,\ell_8$ are nonnegative integers, it follows directly that $z\leq 0$. Now, consider the following cases:
\begin{enumerate}
\item $z=0$: Then, \begin{equation}\label{klgl1}\ell_5+\ell_6+\ell_7+2\ell_8\leq p-1\end{equation} Assume that $x_i<0$, i.e. $x_i \leq -1$ for some $1 \leq i \leq 4$. Since $\ell_1,...,\ell_4$ are nonnegative integers, it follows that either $\ell_5+\ell_6+\ell_7+2\ell_8\geq p$ or $\ell_j+\ell_8\geq p$ for some $5 \leq j \leq 7$, a contradiction to (\ref{klgl1}). Thus, since $x_1+x_2+x_3+x_4=0$, it follows that $x_1=x_2=x_3=x_4=0$ and that
\[\sum_{j=1}^8a_{1,j}\ell_j=...=\sum_{j=1}^8a_{4,j}\ell_j=0\]
in this example.
\item $z<0$: Assume that $\ell_5+\ell_6+\ell_7+2\ell_8<(-z+1)p$. Since $\ell_1\geq 0$, it follows that $x_1>z-1$, and since $x_1$ is integral, that $x_1\geq z$. Since $x_1+x_2+x_3+x_4=3z$, it follows that $x_2+x_3+x_4 \leq 2z$. Now assume that $x_i\geq z$ for $2 \leq i \leq 4$. Then $x_2+x_3+x_4\geq 3z$,a contradiction. Hence there exists an index $i$ such that $x_i<z$, and hence $x_i\leq z-1$. Since $\ell_i\geq 0$, it follows that $\ell_{i+2}+\ell_8\geq (-z+1)p$, a contradiction since $\ell_{i+2}+\ell_8\leq \ell_5+\ell_6+\ell_7+2\ell_8<(-z+1)p$ by assumption. Thus, we have $\ell_5+\ell_6+\ell_7+2\ell_8\geq(-z+1)p$, which implies
$p\leq \ell_5+\ell_6+\ell_7+2\ell_8+zp\leq p-1$, a contradiction.
\end{enumerate}
Thus, it follows that the only possible case is $z=0$, and $x_1=x_2=x_3=x_4=0$, which proves that Condition \ref{pggt} is satisfied in this example.
\section{The statement D1}
For the proof of congruence (\ref{cong1}), the coefficients $c_{\bf a}$ of
\[f(X)=\sum_{\bf a}c_{\bf a}X^{\bf a}\]
did not play a role. This is different if one is interested in the proof of part D1 of the Dwork congruences. Let $n \in \mathbb{N}$, and write $n=n_0+pn_1$, where $n_0\leq p-1$. Then, to prove D1 for the sequence $a(n):=[f^n]_0$ means that one has to prove that
\begin{equation}\label{D1neu}\frac{\left[f^{n_0+n_1p}\right]_0}{\left[f^{n_1}\right]_0}\in \mathbb{Z}_p.\end{equation} 
Sticking to the notation of the previous sections, we write
\begin{equation}\label{D1crit}f^{n_0+n_1p}(X)=f^{n_0}(X)f^{n_1}(X^p)+pf^{n_0}(X)g_{n-1,1}(X).\end{equation}
Assume that $p^k|[f^{n_1}]_0$. To prove (\ref{D1neu}), one has to prove that
$p^k|[f^{n_0+n_1p}]_0$. By (\ref{D1crit}), this is equivalent to proving that $p^{k-1}|[f^{n_0}g_{n_1,1}(X)]_0$. Thus, the proof of part D1 of the Dwork congruences requires an investigation in the $p-$adic orders of the constant terms of $f^{n_1}$ and $g_{n_1,1}$ for arbitrary $n_1$, and requires methods that are completely different from the methods we applied to prove the congruence $D3$.\\

\noindent {\bf Acknowledgement.} We thank A. Mellit for his comments. The work of the first author was funded by the SFB Transregio 45.

\end{document}